\documentclass[11pt]{article}
\usepackage[utf8]{inputenc} 
\usepackage[left=1in,right=1in,top=1.15in,bottom=1.25in]{geometry}
\usepackage{authblk}
\usepackage[small]{titlesec}
\usepackage{amsmath,amssymb,graphicx,booktabs}
\usepackage{times} 
\usepackage{multirow}
\usepackage[dvipsnames,table,dvipsnames*, svgnames*, hyperref]{xcolor}
\usepackage[colorlinks=true,linkcolor=blue,citecolor=blue,urlcolor=blue,breaklinks]{hyperref}

\usepackage{amssymb,bm,bbm} 	
\DeclareUnicodeCharacter{202F}{\,}

\newtheorem{theorem}{Theorem}[section]

\newtheorem{lemma}[theorem]{Lemma}

\newtheorem{proposition}[theorem]{Proposition}
\newtheorem{remark}[theorem]{Remark}

\newtheorem{assumption}{Assumption}[section]
\newcommand{\zb}{\mathbf{z}}
\newcommand{\sfc}{\mathsf{c}}

\newcommand{\dd}{\mathrm{d}}
\newcommand{\OO}{\mathrm{O}}
\newcommand{\oo}{\mathrm{o}}
\newcommand{\wls}{\mathrm{wls}}
\newcommand{\SNR}{\mathsf{SNR}}
\newcommand{\sfx}{\mathsf{x}}
\newcommand{\calS}{\mathcal{S}}
\newenvironment{proof}[1][Proof]{\noindent\textbf{#1.} }{\ \rule{0.5em}{0.5em}}
\numberwithin{equation}{section}
\numberwithin{figure}{section}

\date{}

\begin{document}
\title{\bf Phase transitions and approximations of mean squared error for state-space models with fractional differencing} 


\author[1]{Prabir Burman \thanks{Email: pburman@ucdavis.edu.}}
\author[1]{Xiucai Ding \thanks{Email: xcading@ucdavis.edu.  The author is partially supported by NSF DMS-2515104.}}
\author[1]{
Robert H. Shumway \thanks{Email: rhshumway@ucdavis.edu.}}

\affil[1]{Department of Statistics, University of California, Davis}

\maketitle
\vspace{-30pt}
\begin{center}
      \textit{This paper is dedicated to the memory of our colleague, Professor Robert H. Shumway (1936–2025), whose vision, mentorship, and lifelong contributions continue to inspire us.}
\end{center}

\vspace{5pt}
\begin{abstract}
We study trend estimation in state-space models in which the trend has a fractional stochastic difference of order $d>0$ and the observation errors form a short-range-dependent stationary process. Using finite-sequence fractional summation and differencing operators, we analyze the penalized least-squares estimator obtained by shrinking the fractional differences of the trend. We derive asymptotic mean squared error (MSE) approximations for all $d>0$ and identify a sharp phase transition at $d=1/2$. When $d>1/2$, the estimator is consistent and its optimally balanced MSE has order $n^{-(2d-1)/(2d)}$. At the boundary $d=1/2$, we obtain a refined finite-sample approximation and show that the MSE decreases at the slower order $\log\log n/\log n$. When $0<d<1/2$, the MSE converges to an explicit positive limit, so consistent recovery of the trend is impossible under the considered scaling. We also describe a practical criterion for choosing the penalty parameter and differencing order, and numerical experiments illustrate the MSE approximations and the behavior of the selection procedure.
\end{abstract}

\section{Introduction}
Estimating the trend of a time series is a fundamental task in many applications like economic and financial forecasting \cite{hatanaka1996,tsay2010analysis}, climate modeling \cite{MUDELSEE2019310}, biomedical monitoring \cite{patharkar2024predictive}, and public health surveillance \cite{ensor2024online}. In the time series analysis literature \cite{brockwell2016introduction,commandeur2007introduction,durbin2000,shumway2025}, the following state-space model for the observations $Y_t$, $1 \leq t \leq n$, has received significant attention due to its simplicity and practical interpretability
\begin{equation}\label{eq_basicmodel}
Y_{t}=\mu_{t}+\varepsilon_{t}, \ t=1,..,n, 
\end{equation}
where $\{\mu_t\}$ denotes the trend component, and $\{\varepsilon_t\}$ is assumed to be a zero-mean stationary process with variance $\sigma^2_\varepsilon$ which is independent of $\{\mu_t\}$.

Modeling the trend component in equation (\ref{eq_basicmodel}) can be broadly classified into two categories: deterministic $\mu_t$ and stochastic $\mu_t$. Deterministic trends can be modeled either parametrically, when appropriate, or nonparametrically using techniques such as kernel smoothing, local polynomial regression, and sieve expansions. For a detailed review from this perspective, we refer readers to the monograph \cite{fan2003}. We point out that, in the nonparametric modeling, it is typically assumed that the trend $\mu_t$ can be expressed as $\mu(t/n)$, where $\mu(\cdot)$ is a smooth function defined on $[0,1]$. It is well known that if $\mu$ is $q$ times differentiable and its $p$th derivative satisfies a Lipschitz condition of order $\theta$, then the optimal mean squared error (MSE) of a nonparametric estimator of $\mu(\cdot)$ is of order $\mathrm{O}(n^{-2p/(2p+1)})$, where $p = q + \theta$ \cite{stone1982}.

Stochastic trend models are widely used by engineers, statisticians, and economists \cite{box2015time, durbin2000, hatanaka1996, shumway2025} due to their modeling flexibility and ability to capture long-memory behavior in time series data. In particular, $\mu_t$ is modeled using fractional differencing, meaning that the $d$th difference of the trend consists of mean-zero i.i.d. (or stationary) random variables, where $d > 0$ is a real number representing the order of differencing \cite{burman_shumway2009estimation,hartl2025robust,hassler2018time,nielsen2015asymptotics, palma2007long}. To be more specific, one assumes that for some $d>0$
\begin{equation}\label{eq_differencestandardform}
\nabla^d \mu_t=\gamma_t, 
\end{equation}
where $\nabla$ is the difference operator and $\{\gamma_t\}$ are some mean zero i.i.d. random variables with variance $\sigma^2_{\gamma}.$ The model (\ref{eq_differencestandardform}) and its variants have been widely used in the literature to model stochastic trends. For example, see \cite{burman_shumway2009estimation, CHANAOS, hang2006estimation, durbin2000, grassi2014long,haldrup2007estimation, hartl2022approximate, hassler2018time}.  

We point out that the system defined by (\ref{eq_basicmodel}) and (\ref{eq_differencestandardform}) constitutes a state space model. In the literature, a common approach to obtaining a solution $\bm{\mu} = (\mu_1, \cdots, \mu_n)^\top$ is to minimize the following objective:
\begin{equation}\label{eq_optimizationproblem}
\sum_{t=1}^n (Y_t - \mu_t)^2 + \nu \sum_{t=r}^n (\nabla^d \mu_t)^2, \quad \nu := \frac{\sigma_\varepsilon^2}{\sigma_\gamma^2},
\end{equation}
where $r \equiv r(d)$ is an integer depending on $d$. In particular, when $d$ is an integer, one typically chooses $r = d+1$, as in \cite{burman_shumway2009estimation}. A key technical advantage of minimizing (\ref{eq_optimizationproblem})—as will be demonstrated in Section \ref{sec_setupestimation}—is that it admits a closed-form solution, which is closely related to the Kalman filter \cite{burman_shumway2009estimation, hartl2025robust}. This connection allows us to analyze its asymptotic properties.

In this paper, we aim to investigate the theoretical performance of the solution obtained by minimizing (\ref{eq_optimizationproblem}) for the model (\ref{eq_basicmodel}) under the setup in (\ref{eq_differencestandardform}) for all $d > 0$. Specifically, we study how the asymptotic behavior and phase transitions of the mean squared error (MSE) of the estimates depend on the parameters $d$ and $\nu$, where $\nu$ is closely related to the signal-to-noise ratio; see Section \ref{sec_setupestimation} for a more precise formulation.  We now provide an overview of our main results. We show that, under mild conditions on $\nu$, the asymptotic behavior of the mean squared error (MSE) exhibits a phase transition that can be fully characterized by $d$ for all $d > 0$. First, as shown in part (1) of Theorem \ref{thm_mainthm}, when $d > 1/2$, we obtain a consistent estimator whose MSE can be of order $\mathrm{O}\left( n^{-(2d - 1)/2d} \right)$. Second, as stated in part (2) of Theorem \ref{thm_mainthm}, when $d = 1/2$, the estimator from (\ref{eq_optimizationproblem}) remains consistent, but with a much slower convergence rate of order $\mathrm{O}\left( \log \log n / \log n \right)$. Finally, when $0 < d < 1/2$, (\ref{eq_optimizationproblem}) fails to yield a consistent estimator under the scaling considered here. In this case, we explicitly compute the nontrivial limiting value of the MSE, as presented in part (3) of Theorem \ref{thm_mainthm}. In addition, when consistent estimators can be obtained by minimizing (\ref{eq_optimizationproblem}) (i.e., $d \geq 1/2$), we also propose a data-driven approach to select the practically unknown parameters $d$ and $\nu$, as detailed in Section \ref{sec_selectionofparameters}. This method demonstrates superior performance in our numerical studies, as shown in Section \ref{sec_numericalstudy}.

We now discuss several related works in the literature. In \cite{burman_shumway2009estimation}, the authors studied the problem when $d \geq 1$ is an integer. They showed that for $d \in \mathbb{N}^+$ and divergent $\nu$, the optimization problem in (\ref{eq_optimizationproblem}) yields a consistent estimator, and the mean squared error (MSE) can converge at a rate of $\mathrm{O}\left( n^{-(2d - 1)/2d} \right)$. However, several challenges arise when extending their results to general $d > 0$. For example, one must appropriately choose the parameter $r$ in (\ref{eq_optimizationproblem}) and define fractional differences for general $d > 0$ on finite sequences. Additionally, when $d < \frac{1}{2}$, the estimator becomes inconsistent, and the analysis in \cite{burman_shumway2009estimation} does not directly apply. Computing the non-vanishing asymptotic MSE in this case requires new tools. On the other hand, in \cite{CHANAOS, hang2006estimation, hartl2025robust, hartl2022approximate, nielsen2015asymptotics}, the authors consider a closely related fractionally integrated process instead of (\ref{eq_differencestandardform}), and establish consistency of the parameter estimates under various assumptions on ${\varepsilon_t}$. However, the asymptotic behavior of the mean squared error (MSE) and its phase transitions have not been investigated within their framework.   

Before concluding this section, we offer two remarks on possible future generalizations of our results and techniques. First, in the current paper, we study the performance of the estimators by minimizing (\ref{eq_optimizationproblem}), which is closely related to the likelihood function of (\ref{eq_basicmodel}) under the i.i.d. Gaussian assumption for $\{\varepsilon_t\}$ \cite{hang2006estimation}. An alternative to (\ref{eq_optimizationproblem}), especially when $\{\varepsilon_t\}$ is not i.i.d., is to consider a weighted least squares estimator that incorporates the covariance structure of $\{\varepsilon_t\}$. Analogous results on the phase transitions of the asymptotic MSE have been established in \cite{burman_shumway2009estimation} to some extent for $d \in \mathbb{N}^+$. We believe our techniques can be extended to general $d > 0$; see Remark \ref{rem_generalizationweighted} for further discussion. Second, although we focus on the stationary setup, we believe our results can be extended to cases where $\{\varepsilon_t\}$ in (\ref{eq_basicmodel}) or $\{\gamma_t\}$ in (\ref{eq_differencestandardform}) is locally stationary, as considered in \cite{ding2023autoregressive,ding_zhou2025pacf}. We plan to pursue this direction in future work.

The remainder of the paper is organized as follows. Section \ref{sec_setupestimation} introduces the basic setup and presents the explicit solution to the optimization problem (\ref{eq_optimizationproblem}). Section \ref{sec_mainresultstechinical} states our main results. In Section \ref{sec_selectionofparameters}, we discuss strategies for selecting the practically unknown parameters $\nu$ and $d$ when consistent estimation is feasible. Numerical simulations are presented in Section \ref{sec_numericalstudy}, and technical proofs are provided in Appendices \ref{appendxi_mainprop} and \ref{appendix_technical_lemmas}.

\vspace{3pt}

\noindent {\bf Conventions.} For two sequences of deterministic positive values $\{a_n\}$ and $\{b_n\}$, we write $a_n=\mathrm{O}(b_n)$ if $a_n \leq C b_n$ for some positive constant $C>0$. We write $a_n \asymp  b_n$ if $a_n=\OO(b_n)$ and $b_n=\OO(a_n).$ Moreover, we write $a_n=\mathrm{o}(b_n)$ if $a_n \leq c_n b_n$ for some positive sequence $c_n \downarrow 0.$ We use $\mathbb{N}$ to denote the set of natural numbers (including zero), and $\mathbb{N}^+$ to denote the set of positive natural numbers.

\section{Some background, the assumptions and basic setup}\label{sec_setupestimation}

In this section, we provide the basic setup, assumptions and some background on (\ref{eq_differencestandardform}) and the closed form solution of (\ref{eq_optimizationproblem}). We start with introducing an important operator from \cite{BURMAN2006677}. For $d>0$ and any sequence $\zb=(z_{1},...,z_{n})^\top \in \mathbb{R}^n,$ we denote the operator $\calS_d: \mathbb{R}^n \rightarrow \mathbb{R}^n$ that for all $1 \leq t \leq n$  
\begin{equation}\label{eq_definitionoperator}
(\calS_{d}\zb)(t)=\sum_{j=1}^t (-1)^{t-j}\binom{-d}{t-j}z_{j},
\end{equation} 
where $\binom{-d}{t-j}$ is the generalized binomial coefficient. The operator defined in (\ref{eq_definitionoperator}) possesses several important properties, as established in \cite[Lemma 1]{BURMAN2006677}, for example, for any real numbers $\alpha, \beta \in \mathbb{R},$
\begin{equation}\label{eq_operatorproperty1}
\calS_0=\mathcal{I}, \ \ \ \calS_{\alpha} \calS_{\beta}=\calS_{\alpha+\beta},
\end{equation}  
which implies that for $d>0$
\begin{equation}\label{eq_operatorproperty2}
\calS_{-d}=\calS_d^{-1}. 
\end{equation}
We point out that when $d>0, \ $ $\calS_{-d}$  coincides with the  difference operator \cite{gray1988new}. Moreover, in matrix form, $\calS_{-d} \zb$ can be rewritten as
\begin{equation*}
\calS_{-d} \zb = S \zb,
\end{equation*}
where $S \in \mathbb{R}^{n \times n}$ is a lower triangular matrix with entries defined by $S_{tj} = (-1)^{t-j} \binom{-d}{t-j}$ for $1 \leq t \leq n$ and $j \leq t$. 

Denote $S^{(r-1)}$ be the $(n+1-r) \times n$ matrix obtained by deleting the first $(r-1)$ rows of $S.$ With the above definitions and the conventions $\bm{\mu} = (\mu_1, \cdots, \mu_n)^\top$ and $\bm{Y} = (Y_1, \cdots, Y_n)^\top$, we can rewrite equation (\ref{eq_optimizationproblem}) as follows
\begin{equation}\label{eq_miminizationreducedform}
\|\bm{Y}-\bm{\mu}\|_2^{2}+\nu\sum_{t=r}^n \left[(\calS_{-d}\bm{\mu})(t)\right]^{2}=\|\bm{Y}-\bm{\mu}\|_2^{2}+\nu \bm{\mu}^\top (S^{(r-1)})^\top S^{(r-1)} \bm{\mu}.
\end{equation}
Minimizing the above quantity with respect to $\bm{\mu},$ we obtain the estimate
\begin{equation}\label{eq_estimatefinal}
\widehat{\bm{\mu}}=(I+\nu (S^{(r-1)})^\top S^{(r-1)})^{-1} \bm{Y},
\end{equation}
where $I$ is the $n \times n$ identity matrix. For $d>0$, we set $r=\lceil d \rceil + 1$, where $\lceil d \rceil$ denotes the ceiling of $d$. Therefore, the main objective of this paper is to study $\mathbb{E}\|\widehat{\bm{\mu}} - \bm{\mu}\|_2^2$.

Before introducing our assumptions, we first provide some discussion on the model (\ref{eq_basicmodel}). Following the discussion in \cite{burman_shumway2009estimation}, for $\mu_t, \ 1 \leq t \leq n,$ satisfying (\ref{eq_differencestandardform}), one can solve and express it in an alternative form as follows
\begin{align}\label{eq_mu1part}
\mu_t& =\sum_{j=0}^{r-2} \beta_{r-2-j} (\calS_{d-1-j} \bm{1})(t)+(\calS_d \bm{\gamma})(t), \nonumber \\
&:=\mu_{1t}+\mu_{2t}, 
\end{align}
where $\bm{1} \in \mathbb{R}^n$ is the vector with all entries equal to one, $\bm{\gamma}=(\gamma_1, \cdots, \gamma_n)^\top$ and \(\{\beta_j\}_{j=0}^{r-2}\) are constants determined by the initial conditions of the fractional-difference equation. Based on the above decomposition, we can proceed to discuss the overall signal-to-noise ratio (SNR) for the model (\ref{eq_basicmodel}), denoted as 
\begin{equation}\label{eq_defnsnr}
\SNR:=\frac{\mathbb{E} \|\bm{\mu} \|_2^2}{\mathbb{E} \| \bm{\varepsilon} \|_2^2},
\end{equation}     
where $\bm{\mu}=(\mu_1, \cdots, \mu_n)^\top$ and $\bm{\varepsilon}=(\varepsilon_1, \cdots, \varepsilon_n)^\top.$ Due to stationarity, we have that $\mathbb{E}\|\bm{\varepsilon} \|_2^2=n\sigma_\varepsilon^2.$ Moreover, by a discussion similar to equation (2.1) of \cite{burman_shumway2009estimation}, one can readily obtain that 
\begin{equation}\label{eq_basicdefinition}
\mathbb{E}\| \bm{\mu}_1 \|_2^2 \asymp \sum_{j=0}^{r-2} \beta^2_{r-2-j}n^{2(d-j-0.5)}, \ \ \mathbb{E} \| \calS_d \bm{\gamma} \|_2^2 \asymp \left\{
\begin{array}{cc}
n^{2d} \sigma_{\gamma}^{2} & d>1/2 \\ 
 (n\log n) \sigma_{\gamma}^{2} & d=1/2 \\ 
n \sigma_{\gamma}^{2} & 0<d<1/2%
\end{array}
\right. , 
\end{equation} 
where $\bm{\mu}_1 \in \mathbb{R}^n$ is defined using the $\mu_{1t}, 1 \leq t \leq n,$ in (\ref{eq_mu1part}). 

To avoid repetition, we now summarize the main assumptions of this paper as follows. 
\begin{assumption}\label{assum_mainassumption}
Throughout the paper, we assume the following holds for the model (\ref{eq_basicmodel}). 
\begin{enumerate}
\item We assume that $\{\varepsilon_t\}$ is a zero-mean stationary process with $0<\sigma_\varepsilon^2<\infty$. Furthermore, we assume that for some constant $\delta > 0$, its autocovariance function satisfies $ \rho(|h|) = \OO(|h|^{-2-\delta})$ as $|h| \rightarrow \infty$. 

\item We assume that $\{\mu_t\}$ satisfies (\ref{eq_differencestandardform}) with some $d > 0$, where $\{\gamma_t\}$ are i.i.d. mean-zero random variables, independent of $\{\varepsilon_t\}$, with variance $\sigma_\gamma^2 \equiv \sigma_\gamma^2(d,n)$, and for some constant $\tau > 0$, we have
\begin{equation}\label{eq_sigmaform}
\sigma_{\gamma}^{2}=\frac{\tau^{2}}{(n^{2d-1}-1)/(2d-1)}, \ \ d>0. 
\end{equation}
For $d = 1/2$, the right-hand side of (\ref{eq_sigmaform}) is defined as the limit as $d \to 1/2$. Furthermore, we assume that in (\ref{eq_basicdefinition}),  $\mathbb{E}\| \bm{\mu}_1 \|_2^2 / n = \OO(1)$.
\end{enumerate}

%
%
%
%
\end{assumption}

First, the assumptions imposed on ${\varepsilon_t}$ ensure that the noise component forms a short-range dependent stationary process— a mild and commonly adopted condition in the literature \cite{burman_shumway2009estimation,hang2006estimation,commandeur2007introduction,durbin2000,haldrup2007estimation,hartl2025robust}.

Second, (\ref{eq_sigmaform}) is simply a reformulation of the second term in (\ref{eq_basicdefinition}) using the Box-Cox transform. It ensures that $n^{-1}\mathbb{E}\|\calS_d \bm{\gamma}\|_2^2 \asymp 1$, which, without loss of generality, facilitates a nontrivial discussion of the signal-to-noise ratio (SNR). In fact, we emphasize that the assumptions and scalings imposed here are mild and primarily serve to ensure that the signal-to-noise ratio (SNR) defined in (\ref{eq_defnsnr}) lies in a nontrivial regime, i.e., $0 < \SNR < \infty$. On the one hand, if $\SNR$ approaches zero—indicating that the signal from the trend is completely dominated by noise—then reliable estimation of the trend from $\{Y_t\}$ becomes unlikely. This scenario arises when $\sigma^2_\varepsilon \gg \mathbb{E}\| \bm{\mu}_1 \|_2^2 / n + \sigma_\gamma^2$. In particular, when the latter term is bounded, the assumption $\sigma^2_\varepsilon = \OO(1)$ rules out this degenerate case. On the other hand, if $\SNR$ tends to infinity, $\{Y_t\}$ becomes nearly identical to the underlying trend, making it sufficient to use $\{Y_t\}$ directly as an estimate. This occurs when $\sigma^2_\varepsilon \ll \mathbb{E}\| \bm{\mu}_1 \|_2^2 / n + \sigma_\gamma^2$, i.e., either $\sigma^2_\varepsilon$ approaches zero or $\mathbb{E}\| \bm{\mu}_1 \|_2^2 / n$ or $\sigma_\gamma^2$ diverges. Our assumptions are designed to exclude such extreme cases.

Finally, we note that the i.i.d. assumption on $\{\gamma_t\}$ is not essential. It can be readily relaxed to allow $\{\gamma_t\}$ to follow a stationary process. We will pursue this direction in future work.

\section{Asymptotic properties of the mean squared error}\label{sec_mainresultstechinical}

\subsection{Main results}

We study the MSE of the estimator in (\ref{eq_estimatefinal}). The results follow from the bias-variance decomposition in (\ref{eq_biasandvariancedefinition}) and Proposition \ref{propo_mainprop}. We first prepare some notations. 

 Let $f$ denote the spectral density of $\{\varepsilon_t\}$ that 
\begin{equation}\label{eq_spectraldensity}
f(\omega)=\sum_{h \in \mathbb{Z}}\rho(h)e^{\mathrm{i}h\omega}.
\end{equation}
For \(d\geq 1/2\), we define
\begin{equation}\label{eq_c1c2}
\mathsf{c}_1
:=
\frac{f(0)}{2d\pi}
B\left(\frac{1}{2d},\,2-\frac{1}{2d}\right),
\end{equation}
where \(B(\cdot,\cdot)\) denotes the beta function. For \(d>1/2\), also define
\begin{equation}\label{eq_c1c22}
\mathsf{c}_2
:=
\frac{\tau^2(2d-1)}{2d\pi}
B\left(1-\frac{1}{2d},\,1+\frac{1}{2d}\right).
\end{equation}
For $1\leq j\leq n$, set
\begin{equation}\label{eq_points}
\sfx_j=\pi\frac{n+1-j}{n+1},
\end{equation}
and let
\begin{equation}\label{eq_su}
s(u)=2(1-\cos u).
\end{equation}
Moreover, when $d=1/2$, we define the refined approximation
\begin{equation}\label{eq_boundaryrefined}
\mathcal{R}^{\mathrm{ref}}_n(\nu):=\frac{\tau^2\nu^2}{n\log n}\sum_{j=1}^n\frac{s(\sfx_j)^{1/2}}{(1+\nu s(\sfx_j)^{1/2})^2}
+\frac{1}{n}\sum_{j=1}^n\frac{f(\sfx_j)}{(1+\nu s(\sfx_j)^{1/2})^2}.
\end{equation}

The main result of the paper is summarized as follows. 

\begin{theorem}\label{thm_mainthm}
Suppose Assumption \ref{assum_mainassumption} holds and $f(0)>0$. Recall $\nu$ in (\ref{eq_optimizationproblem}). Then the following statements hold as $n\to\infty$.
\begin{enumerate}
\item[(1).] If $d>1/2$, $\nu\to\infty$, and $\nu^{1/(2d)}/n\to0$, then
\begin{equation}\label{eq_dgeaterthan12}
\frac{\mathbb{E}\|\widehat{\bm{\mu}}-\bm{\mu}\|_2^{2}}{n}=[\sfc_{1}\nu^{-\frac{1}{2d}}+\sfc_{2}(%
\nu^{\frac{1}{2d}}/n)^{2d-1}](1+\oo(1)).  
\end{equation}
\item[(2).] If $d=1/2$, $\nu\to\infty$ and $\nu/n\to0$, then for $\mathcal{R}^{\mathrm{ref}}_n(\nu)$ defined in (\ref{eq_boundaryrefined})
\begin{equation}\label{eq_dhalfrefined}
\frac{\mathbb{E}\|\widehat{\bm{\mu}}-\bm{\mu}\|_2^2}{n}
=\mathcal{R}^{\mathrm{ref}}_n(\nu)+\OO\left(n^{-1}+\frac{\nu}{n\log n}\right).
\end{equation}
If, in addition, when $\log n/(\nu\log\nu)\to0$, then
\begin{equation}\label{eq_dexactly1/2}
\frac{\mathbb{E}\|\widehat{\bm{\mu}}-\bm{\mu}\|_2^{2}}{n}=\frac{\tau^2}{\pi}\frac{\log\nu}{\log n}(1+\oo(1)), 
\end{equation}
where $\tau$ is defined in (\ref{eq_sigmaform}). 
\item[(3).] If $0<d<1/2$ and $\nu\asymp1$, then
\begin{align}\label{eq_dlessthan12}
\frac{\mathbb{E}\|\widehat{\bm{\mu}}-\bm{\mu}\|_2^{2}}{n}
=\Big(
\frac{\tau^2(1-2d)\nu^2}{\pi}
\int_{0}^{\pi}
&\frac{(2-2\cos u)^d}
{[1+\nu(2-2\cos u)^d]^2}\dd u
\notag\\
&
+\frac{1}{\pi}
\int_{0}^{\pi}
\frac{f(u)}
{[1+\nu(2-2\cos u)^d]^2}\dd u
\Big)
\left[1+\oo(1)\right].
\end{align}
\end{enumerate}  
\end{theorem}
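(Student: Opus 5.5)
We would start from the bias--variance decomposition. Write $A:=I+\nu (S^{(r-1)})^\top S^{(r-1)}$ and $H:=A^{-1}$. Then
\[
\widehat{\bm\mu}-\bm\mu=(H-I)\bm\mu+H\bm\varepsilon .
\]
By independence of $\bm\varepsilon$ and $\bm\mu$, this gives
\[
\mathbb{E}\|\widehat{\bm\mu}-\bm\mu\|_2^2=\mathbb{E}\|(H-I)\bm\mu\|_2^2+\operatorname{tr}(H\Sigma_\varepsilon H),
\]
where $\Sigma_\varepsilon$ is the Toeplitz covariance of $\bm\varepsilon$.

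The bias term splits further using (\ref{eq_mu1part}). The deterministic part $\bm\mu_1$ lies, up to lower order, in the span of $\calS_{d-1-j}\bm 1$, $0\le j\le r-2$. Since the first $r-1$ rows are deleted, the operator $S^{(r-1)}$ nearly annihilates these vectors, so $(H-I)\bm\mu_1$ contributes only $\OO(1)$ or $\OO(\nu n^{-1})$-type error. With $\mathbb{E}\|\bm\mu_1\|^2/n=\OO(1)$, this error is absorbed into the stated remainders. The random part is
\[
\mathbb{E}\|(H-I)\calS_d\bm\gamma\|^2=\sigma_\gamma^2\operatorname{tr}\bigl((I-H)\calS_d\calS_d^\top(I-H)\bigr).
\]
Using $I-H=\nu H (S^{(r-1)})^\top S^{(r-1)}$ and $S=\calS_d^{-1}$, this reduces to $\sigma_\gamma^2\nu^2\operatorname{tr}(H K H)$, where $K$ is the matrix $S^\top S$ with a low-rank correction from the deleted rows.

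The key step is a spectral approximation. We would show that $S^\top S$ and $A$ are asymptotically diagonalized by the discrete sine (or cosine) basis. The eigenvalues of $S^\top S$ are then approximated by $s(\sfx_j)^d=(2-2\cos\sfx_j)^d$, because the symbol of $(1-B)^d$ has squared modulus $|1-e^{\mathrm i u}|^{2d}=s(u)^d$. The rank-$(r-1)$ and boundary (Toeplitz-versus-circulant) corrections are controlled through the summability of the binomial coefficients $|\binom{-d}{k}|\asymp k^{-d-1}$.

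Likewise, $\Sigma_\varepsilon$ is approximated by its symbol $f$; the decay $\rho(h)=\OO(|h|^{-2-\delta})$ makes $\sum|h\rho(h)|<\infty$, so the resulting trace error is $\OO(1)$. This yields
\[
\frac{\mathbb{E}\|\widehat{\bm\mu}-\bm\mu\|^2}{n}\approx\frac{\sigma_\gamma^2\nu^2}{n}\sum_j\frac{s(\sfx_j)^{d}}{(1+\nu s(\sfx_j)^d)^2}\cdot\frac{1}{s(\sfx_j)^{d}}\cdot s(\sfx_j)^{d}+\frac1n\sum_j\frac{f(\sfx_j)}{(1+\nu s(\sfx_j)^d)^2},
\]
which for $d=1/2$ is exactly $\mathcal R^{\rm ref}_n$ once $\sigma_\gamma^2\sim\tau^2/\log n$. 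More precisely, the bias summand is $\nu^2 s^d/(1+\nu s^d)^2$ times the eigenvalue weight. We expect this Toeplitz/trace approximation, with error uniform in $\nu$, to be the main obstacle. The hardest part is near $u=0$, where $s(u)^d$ is small and $H$ is far from the identity. We would first try to bound $\|A-\tilde A\|$ for a circulant-type surrogate $\tilde A$, and use the resolvent identity $H-\tilde H=H(\tilde A-A)\tilde H$ together with $\|H\|\le1$ and trace-class bounds on the low-frequency part of the perturbation.

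The three regimes then follow from Riemann-sum asymptotics.

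For $d<1/2$ with $\nu\asymp1$, the sums converge directly to $\pi^{-1}\int_0^\pi$. The factor $\sigma_\gamma^2\approx\tau^2(1-2d)$ gives (\ref{eq_dlessthan12}).

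For $d>1/2$, the variance sum concentrates at $u\approx\nu^{-1/(2d)}$. Substituting $u=\nu^{-1/(2d)}v$ with $s(u)\approx u^2$ gives
\[
\frac{f(0)}{\pi}\,\nu^{-1/(2d)}\int_0^\infty\frac{dv}{(1+v^{2d})^2},
\]
which equals $\sfc_1\nu^{-1/(2d)}$ by the beta integral $\int_0^\infty(1+v^{2d})^{-2}dv=\frac1{2d}B(\frac1{2d},2-\frac1{2d})$. The bias sum uses $\sigma_\gamma^2\approx\tau^2(2d-1)n^{1-2d}$ and the same substitution (the integrand now behaves like $v^{-2d}$ near $0$, which is integrable against the weight), giving $\sfc_2(\nu^{1/(2d)}/n)^{2d-1}$. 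The condition $\nu^{1/(2d)}/n\to0$ ensures the mesh is fine relative to the scale $\nu^{-1/(2d)}$.

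For $d=1/2$, we would evaluate $\mathcal R^{\rm ref}_n$. The bias part equals
\[
\frac{\tau^2}{\pi\log n}\int_{\pi/n}^{\pi}\frac{\nu^2u}{(1+\nu u)^2}\,du\approx\frac{\tau^2}{\pi}\,\frac{\log\nu}{\log n}.
\]
The variance part is $\OO(\nu^{-1}\log\nu)$ or smaller, and this is negligible precisely when $\log n/(\nu\log\nu)\to0$. This gives (\ref{eq_dexactly1/2}).
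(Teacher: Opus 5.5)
\textbf{Gap: the trace approximation must be uniform in $\nu$, and the resolvent bound you propose loses a factor $\nu$.}

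Your architecture is the paper's. You use the bias--variance split, the exact reduction of the random bias to $\sigma_\gamma^2\nu^2\operatorname{tr}(H\overline U_d H)$ with $\overline U_d=(S^{(r-1)})^\top S^{(r-1)}$, sine-basis eigenvalues $s(\sfx_j)^d$, control of $T_n(f)$ through $\sum|h\rho(h)|<\infty$, and scaled Riemann sums in the three regimes. The gap is the step you flag yourself. The approximation $\operatorname{tr}(I+\nu S^\top S)^{-q}=\sum_j(1+\nu s(\sfx_j)^d)^{-q}+\OO(1)$, and its analogue weighted by $f$, must hold with an $\OO(1)$ error \emph{uniformly in $\nu$}. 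The tool you propose cannot deliver that.

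With $A=I+\nu K$ and $\tilde A=I+\nu\tilde K$, the resolvent identity together with $\|H\|,\|\tilde H\|\le1$ only gives $|\operatorname{tr}(H^q-\tilde H^q)|\le q\,\nu\|K-\tilde K\|_1$. The factor $\|K-\tilde K\|_1$ does not shrink with $\nu$: already for $d=1$, $T_n(s)-S^\top S=e_ne_n^\top$. An $\OO(\nu)$ trace error has the following consequences.
\begin{itemize}
\item In part (1) it adds a relative error $\nu^{1+1/(2d)}/n$, which is not $\oo(1)$ under $\nu^{1/(2d)}/n\to0$. At the balanced choice $\nu^*\asymp n^{2d-1}$ it diverges whenever $4d^2-2d-1>0$; for $d=1$ it is of order $n^{1/2}$.
\item In part (2) it turns the remainders $\OO(n^{-1})$ and $\OO(\nu/(n\log n))$ of (\ref{eq_dhalfrefined}) into $\OO(\nu/n)$ and $\OO(\nu^2/(n\log n))$. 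Then (\ref{eq_dexactly1/2}) would additionally need $\nu^2=\oo(n\log\nu)$.
\item The variance has the same problem even after you replace $T_n(f)$ by its sine-basis version $R_n(f)$, because $\operatorname{tr}(H^2R_n(f))$ still involves non-commuting factors.
\end{itemize}
Your remark about ``trace-class bounds on the low-frequency part'' does not say how the factor $\nu$ is removed, and removing it is the whole difficulty. Only part (3), where $\nu\asymp1$ and an $\oo(n)$ trace error suffices, goes through as proposed.

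\textbf{What the paper does instead.} The missing idea is to compare with matrices that differ by perturbations of \emph{bounded rank}. Weyl interlacing then gives $|\operatorname{tr}\phi(A)-\operatorname{tr}\phi(B)|\le\operatorname{rank}(A-B)\cdot\sup|\phi|$ for bounded monotone $\phi$ such as $(1+\nu x)^{-q}$, with no factor of $\nu$ (for $d=1$ this is just Sherman--Morrison). For non-integer $d$, however, $T_n(s^d)-S^\top S$ has unbounded rank. The paper therefore writes $d=\ell+\beta$ and uses the tangent-line bound for the concave map $x\mapsto x^\beta$ at $s(p)$. Via \cite{BURMAN2006677}, $S^\top S$ (and $SS^\top$ for the reverse inequality) is compared, up to rank $2\ell$, with $(1-\beta)s(p)^\beta I+\beta s(p)^{\beta-1}T_n(s)^{\ell+1}$. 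This matrix is exactly diagonalized by the sine basis, so choosing $p=\sfx_{n+1-j}$ index by index yields eigenvalue bounds matching $s(\sfx_j)^d$. The weight $f$ is handled by splitting it into monotone pieces, and the deletion of $r-1$ rows is another bounded-rank perturbation.

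\textbf{Smaller points.}
\begin{itemize}
\item $\bm\mu_1$ is annihilated exactly, not nearly: $\calS_{-1-j}\bm1$ is supported on the first $j+1\le r-1$ coordinates.
\item The entries of $S$ are $(-1)^k\binom{d}{k}\asymp k^{-d-1}$; by contrast $|\binom{-d}{k}|\asymp k^{d-1}$.
\item The bias integrand $v^{2d}/(1+v^{2d})^2$ behaves like $v^{-2d}$ at infinity, not near $0$, and that is where $d>1/2$ is used.
\item At $d=1/2$ you need the sharp variance $f(0)/(\pi\nu)$. With only the bound $\OO(\nu^{-1}\log\nu)$, negligibility would require $\log n/\nu\to0$ rather than $\log n/(\nu\log\nu)\to0$.
\end{itemize}
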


Theorem \ref{thm_mainthm} establishes the asymptotic results for the MSE of the estimators. Based on these results, we identify several phase transitions. In particular, minimizing (\ref{eq_optimizationproblem}) yields consistent estimators when $d \geq 1/2$, and inconsistent ones when $d < 1/2$. We elaborate on these results as follows.

First, when $d > 1/2$, the two leading terms in (\ref{eq_dgeaterthan12}) are balanced by choosing
\[
\nu^*=\frac{\sfc_1}{(2d-1)\sfc_2}n^{2d-1}.
\]
This choice yields an MSE of order $\OO(n^{-(2d-1)/(2d)})$. The rate matches the usual nonparametric rate $\OO(n^{-2p/(2p+1)})$ for a deterministic trend of smoothness $p=d-1/2$ \cite{stone1982}. Thus, in this asymptotic sense, the random trend in (\ref{eq_differencestandardform}) has effective smoothness $d-1/2$. Second, when $d = 1/2,$ under Assumption \ref{assum_mainassumption}, we find that $\nu \asymp \log n$, which satisfies the additional condition in part (2) of Theorem \ref{thm_mainthm}. Consequently, the right-hand side of (\ref{eq_dexactly1/2}) also vanishes, but at a slower rate of $\OO(\log \log n / \log n)$ compared to the case when $d > 1/2.$ In addition, when $0 < d < 1/2$, under Assumption \ref{assum_mainassumption}, we find that $\nu \asymp 1$, so the right-hand side of (\ref{eq_dlessthan12}) does not vanish, resulting in an inconsistent estimator.

Before concluding this section, we provide two remarks as follows. 
\begin{remark}
We highlight a connection with the perspective in \cite{wahba1978improper}, which assumes a differential operator of order $d \in \mathbb{N}^+$
\begin{equation}\label{eq_differentialform}
\mu_t = \sum_{j=0}^{d-1} \theta_j t^j + \tau \int_0^t (t - u)^{d-1} \dd \mathsf{B}(u),
\end{equation}
where $\mathsf{B}(u)$ is a standard Brownian motion and each $\theta_j$ is normally distributed. Wahba \cite{wahba1978improper} showed that when $d$ is a positive integer, one can obtain an efficient estimator by minimizing the following objective function
\begin{equation}\label{eq_differentialoptimization}
\sum_{t=1}^n (Y_t - \mu_t)^2 + \nu \int_0^1 \left( \frac{\partial^d \mu_u}{\partial u^d} \right)^2 \dd u.
\end{equation}
It is well known in \cite{wahba1978improper} that minimizing the above objective function yields the smoothing spline estimator when $d \in \mathbb{N}^+$. 

The connection between penalizing finite-order differences in (\ref{eq_optimizationproblem}) and finite-order differentials in (\ref{eq_differentialoptimization}) was first identified in \cite{burman_shumway2009estimation} for $d \in \mathbb{N}^+$. We note that our techniques developed in this paper can potentially be applied to analyze (\ref{eq_differentialform}) and extend the results of \cite{wahba1978improper} by studying the MSE properties for $d > 1/2$ without requiring $d \in \mathbb{N}^+$, provided the stochastic integral is well defined in the Itô sense. However, the differential formulation becomes invalid in the Itô framework when $d \leq 1/2$, in which case the problem must be addressed using tools from fractional or Malliavin calculus. In contrast, the difference operator remains well defined even at $d = 1/2$, although it yields a much slower convergence rate. Since this is beyond the scope of the current paper, we will explore this direction in future work.
\end{remark}

\begin{remark}\label{rem_generalizationweighted}
We note that, in addition to minimizing (\ref{eq_optimizationproblem})—which leads to the estimator in (\ref{eq_estimatefinal})—an alternative approach is to consider the weighted least squares method, as discussed in Section 2.2 of \cite{burman_shumway2009estimation}. Specifically, let $R_\varepsilon := \operatorname{Cov}(\bm{\varepsilon}, \bm{\varepsilon})$. Instead of minimizing (\ref{eq_miminizationreducedform}), one may consider minimizing the following objective
\begin{equation*}
(\bm{Y} - \bm{\mu})^\top R_\varepsilon^{-1} (\bm{Y} - \bm{\mu}) + \nu \bm{\mu}^\top (S^{(r-1)})^\top S^{(r-1)} \bm{\mu}.
\end{equation*}
This yields the following estimator
\begin{equation}\label{eq_estimtorwls}
\widehat{\bm{\mu}}_{\wls} = (R_\varepsilon^{-1} + \nu (S^{(r-1)})^\top S^{(r-1)})^{-1} R_\varepsilon^{-1} \bm{Y}.
\end{equation}  

One can follow and slightly generalize the techniques developed in the proof of Theorem \ref{thm_mainthm} to establish the phase transitions and MSE approximations for $\widehat{\bm{\mu}}_{\wls}$. In fact, under the assumption that ${\varepsilon_t}$ follows an AR process, the consistency and convergence rate of the MSE for $\widehat{\bm{\mu}}_{\wls}$ have been established for $d \geq 1 \in \mathbb{N}^+$ in Theorem 4 of \cite{burman_shumway2009estimation}. We believe that analogues of Theorem \ref{thm_mainthm} can also be derived for $\widehat{\bm{\mu}}_{\wls}$ for all $d > 0$, with distinct transitions occurring at $0 < d < 1/2$, $d = 1/2$, and $d > 1/2$. We plan to pursue this direction in future work.

Finally, in practice, $R_{\varepsilon}$ is unknown and must be estimated using an estimator with a sufficiently fast convergence rate. This can be effectively achieved when $\{\varepsilon_t\}$ follows an ARMA model; see the discussion in Section 2.2 of \cite{burman_shumway2009estimation} for more details.  


\end{remark}

%
%

\subsection{Bias-variance argument: proof of Theorem \ref{thm_mainthm}}
In this section, we present the proof of Theorem \ref{thm_mainthm} along with several key technical components. Our analysis is based on the bias-variance decomposition. Let $\overline{\bm{\mu}} = \mathbb{E}(\widehat{\bm{\mu}} \mid \bm{\mu})$. We begin with the following decomposition
\begin{equation}\label{eq_biasandvariancedefinition}
\frac{\mathbb{E} \| \widehat{\bm{\mu}}-\bm{\mu} \|_2^2}{n}=\frac{\mathbb{E}\| \overline{\bm{\mu}}-\bm{\mu}\|_2^2}{n}+\frac{\mathbb{E}\| \widehat{\bm{\mu}}-\overline{\bm{\mu}}\|_2^2}{n}:=\mathcal{B}+\mathcal{V}.
\end{equation}

The following proposition gives the bias and variance approximations. Its proof will be given in Appendix \ref{appendxi_mainprop}.

\begin{proposition}\label{propo_mainprop}
Under the respective asymptotic regimes in Theorem \ref{thm_mainthm}, the bias and variance terms in (\ref{eq_biasandvariancedefinition}) satisfy
\begin{enumerate}
\item[(1).] For the bias part $\mathcal{B},$ we have that 
\begin{equation*}
\mathcal{B}=
\begin{cases}
\sfc_{2}(\nu^{\frac{1}{2d}}/n)^{2d-1}[1+\oo(1)], & d>\frac{1}{2}; \\
\displaystyle \frac{\tau^2\nu^2}{n\log n}\sum_{j=1}^n\frac{s(\sfx_j)^{1/2}}{\{1+\nu s(\sfx_j)^{1/2}\}^2}+\OO\left(\frac{\nu}{n\log n}\right), & d=\frac{1}{2}; \\
\displaystyle \frac{\tau^2(1-2d)\nu^2}{\pi}\int_{0}^{\pi}\frac{(2-2\cos u)^d}{[1+\nu(2-2\cos u)^d]^2}\dd u[1+\oo(1)], & 0<d<\frac{1}{2}.
\end{cases}
\end{equation*}
\item[(2).] For the variance part $\mathcal{V},$ we have that 
\begin{equation*}
\mathcal{V}=
\begin{cases}
\displaystyle \frac{1}{n}\sum_{j=1}^n\frac{f(\sfx_j)}{\{1+\nu s(\sfx_j)^{1/2}\}^2}+\OO(n^{-1}), & d=\frac{1}{2}; \\
\sfc_{1}\nu^{-1/(2d)}[1+\oo(1)], & d>\frac{1}{2}; \\
\frac{1}{\pi}\int_{0}^{\pi} \frac{f(u)}{[1+\nu(2-2\cos u)^{d}]^{2}}\dd u[1+\oo(1)], & 0<d<\frac{1}{2}.
\end{cases}
\end{equation*} 
\end{enumerate}
\end{proposition}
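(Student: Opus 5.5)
The plan is to reduce both $\mathcal{B}$ and $\mathcal{V}$ to traces of spectral functions of the single matrix $M:=(S^{(r-1)})^\top S^{(r-1)}$. Then we approximate the eigenvalues of $M$ by $s(\sfx_j)^d$, $1\le j\le n$, with $\sfx_j$ as in (\ref{eq_points}), and evaluate the resulting sums as Riemann sums or scaled integrals. Write $A=I+\nu M$, so that $\widehat{\bm{\mu}}=A^{-1}\bm{Y}$.

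\emph{Algebraic reduction.} For the variance, $\widehat{\bm{\mu}}-\overline{\bm{\mu}}=A^{-1}\bm{\varepsilon}$, hence $\mathcal{V}=n^{-1}\operatorname{tr}(A^{-1}R_\varepsilon A^{-1})$ with $R_\varepsilon=\operatorname{Cov}(\bm{\varepsilon})$.

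For the bias, $\overline{\bm{\mu}}-\bm{\mu}=(A^{-1}-I)\bm{\mu}=-\nu A^{-1}M\bm{\mu}$. By (\ref{eq_operatorproperty1}), $\calS_{-d}\calS_{d-1-j}\bm 1=\calS_{-1-j}\bm 1$. This is the $(j+1)$-fold integer difference of the constant vector, so it is supported on the first $j+1\le r-1$ coordinates, which are exactly the rows deleted in $S^{(r-1)}$. Therefore $S^{(r-1)}\bm{\mu}_1=0$ and $S^{(r-1)}\bm{\mu}=(\gamma_r,\dots,\gamma_n)^\top$. Consequently
\[
\mathcal{B}=\frac{\nu^2\sigma_\gamma^2}{n}\operatorname{tr}\big(A^{-1}MA^{-1}\big)=\frac{\sigma_\gamma^2}{n}\sum_{k}\frac{\nu^2\lambda_k}{(1+\nu\lambda_k)^2},
\]
where $\lambda_k$ are the eigenvalues of $M$. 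This is exact and uses only that the $\gamma_t$ are i.i.d. and independent of $\bm{\varepsilon}$.

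\emph{Spectral approximation of $M$.} The matrix $S$ is lower-triangular Toeplitz with generating function $(1-e^{\mathrm{i}u})^d$. Hence $S^\top S$ differs from the Toeplitz matrix with symbol $|1-e^{\mathrm{i}u}|^{2d}=s(u)^d$ only by a boundary (Hankel-type) correction, and deleting $r-1$ rows is a rank-$(r-1)$ perturbation. I would compare $M$ with $Q\,\mathrm{diag}(s(\sfx_j)^d)\,Q^\top$, where $Q$ is the discrete sine basis diagonalizing the second-difference matrix; $Q$ has eigenvalues exactly $s(\sfx_j)$, which explains the grid $\sfx_j$.

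For the variance, $R_\varepsilon$ is Toeplitz with symbol $f$, and the summability $\rho(h)=\OO(|h|^{-2-\delta})$ makes its deviation from $Q\,\mathrm{diag}(f(\sfx_j))\,Q^\top$ small in trace norm, giving an $\OO(1)$ trace error. Together these yield
\[
\mathcal{V}=\frac1n\sum_j\frac{f(\sfx_j)}{(1+\nu s(\sfx_j)^d)^2}+\OO(n^{-1})
\]
and the analogous sum for $\mathcal{B}$. At $d=1/2$ the bias error is stated as $\OO(\nu/(n\log n))$: the eigenvalue perturbation is worth $\OO(1)$ in the rescaled quantity $\nu^2\lambda/(1+\nu\lambda)^2\le\nu/4$ per affected index, and this is multiplied by $\sigma_\gamma^2/n\asymp \tau^2/(n\log n)$.

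\emph{Asymptotic evaluation.} Using (\ref{eq_sigmaform}), $\sigma_\gamma^2\sim\tau^2(2d-1)n^{1-2d}$ for $d>1/2$, $\sigma_\gamma^2\sim\tau^2/\log n$ for $d=1/2$, and $\sigma_\gamma^2\sim\tau^2(1-2d)$ for $d<1/2$.
\begin{itemize}
\item For $0<d<1/2$ with $\nu\asymp1$, the summands are bounded continuous functions of $\sfx_j$, and the Riemann sums converge to the stated $\pi^{-1}\int_0^\pi$ integrals.
\item For $d>1/2$, the mass concentrates at $u\asymp\nu^{-1/(2d)}\to0$, a scale much coarser than the grid spacing $\pi/(n+1)$ because $\nu^{1/(2d)}/n\to0$. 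Substitute $u=\nu^{-1/(2d)}v$ and use $s(u)^d\sim u^{2d}$. The variance becomes $\frac{f(0)}{\pi}\nu^{-1/(2d)}\int_0^\infty(1+v^{2d})^{-2}\dd v$, and $\int_0^\infty(1+v^{2d})^{-2}\dd v=\frac1{2d}B(\frac1{2d},2-\frac1{2d})$, which gives $\sfc_1$. The bias becomes $\tau^2(2d-1)n^{1-2d}\frac{\nu^{1-1/(2d)}}{\pi}\int_0^\infty v^{2d}(1+v^{2d})^{-2}\dd v$, and the Beta integral together with the identity $n^{1-2d}\nu^{1-1/(2d)}=(\nu^{1/(2d)}/n)^{2d-1}$ gives $\sfc_2$. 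Dominated convergence controls the region away from zero.
\item For $d=1/2$, I would keep the finite sums as stated, with no further limit.
\end{itemize}

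\emph{Main obstacle.} The hard step is the spectral approximation of $M$ at the level of \emph{relative} accuracy for the small eigenvalues. For $d>1/2$ the bias is driven by eigenvalues of size about $\nu^{-1}\to0$, near $u=0$, and that is exactly where the boundary correction to $S^\top S$ is largest, since the symbol vanishes at $0$ and the Hankel part reflects the long memory of the coefficients $\binom{-d}{k}$. I would first try Cauchy interlacing: the finite-rank row deletion shifts eigenvalue indices only by $r-1$. Combined with a lower bound $\lambda_k\gtrsim(k/n)^{2d}$, obtained by comparing $S^{(r-1)}$ with products of integer difference matrices and fractional sums $\calS_{d-\lceil d\rceil}$, this would show that misplaced indices contribute $\oo(\nu^{-1/(2d)})$ after normalization. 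If interlacing proves insufficient, I would fall back on an Avram--Parter/Szegő-type trace theorem for $\operatorname{tr}\,g(M)$ applied to the rescaled functions $g$.
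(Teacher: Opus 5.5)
Your reduction is the same as the paper's. You have $\mathcal{V}=n^{-1}\operatorname{tr}(A^{-1}T_n(f)A^{-1})$, and $S^{(r-1)}\bm{\mu}_1=0$, so $\mathcal{B}=n^{-1}\sigma_\gamma^2\nu^2\operatorname{tr}(A^{-1}MA^{-1})$ exactly. The deleted rows are a bounded-rank perturbation. $T_n(f)$ is replaced by $Q\,\mathrm{diag}(f(\sfx_j))\,Q^\top$ at $\OO(1)$ trace cost, because the Hankel corrections have trace norm at most $\sum_m|mf_m|$. The sums are then evaluated by rescaling, dominated convergence and the two Beta integrals, and your constants match $\sfc_1,\sfc_2$.

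\textbf{The gap.} It is the step you call the main obstacle and leave open:
\[
\operatorname{tr}\bigl((I+\nu M)^{-2}T_n(f)\bigr)=\sum_{j=1}^n\frac{f(\sfx_j)}{(1+\nu s(\sfx_j)^d)^2}+\OO(1),
\]
together with the analogous identity for $\operatorname{tr}(\nu M(I+\nu M)^{-2})$, with the $\OO(1)$ uniform in $\nu$. Everything quantitative rests on this. The $\OO(n^{-1})$ and $\OO(\nu/(n\log n))$ remainders at $d=1/2$ are exactly this $\OO(1)$. For $d>1/2$ you need trace errors $\oo(n\nu^{-1/(2d)})$, even though both traces are themselves $\oo(n)$.

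None of your candidate tools delivers this:
\begin{enumerate}
\item[(a)] Interlacing only controls bounded-rank differences. For non-integer $d$, the discrepancy between $S^\top S$ and $Q\,\mathrm{diag}(s(\sfx_j)^d)\,Q^\top$ is not of bounded rank: its top-left corner approximates the Hankel matrix of the non-rational symbol $s^d$, which has infinite rank.
\item[(b)] Bounding that discrepancy as a trace-class perturbation costs $\|\phi_\nu'\|_\infty=2\nu$ times its trace norm, where $\phi_\nu(x)=(1+\nu x)^{-2}$.
\item[(c)] A bound $\lambda_k\gtrsim(k/n)^{2d}$ gives the rate, not $\sfc_1,\sfc_2$.
\item[(d)] Szeg\H{o} or Avram--Parter theorems give $\oo(n)$ errors for a \emph{fixed} test function, not uniformly over $\phi_\nu$ concentrating at $0$.
\end{enumerate}
Moreover, for colored noise the variance trace depends on how the eigenvectors of $M$ sit relative to the sine basis. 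Eigenvalue information alone cannot give the display above, and your ``together these yield'' passes over this.

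\textbf{How the paper closes it.} Lemma~\ref{thm_aux_trace_approx} uses an ingredient absent from your plan: the finite-rank comparison of \cite{BURMAN2006677}.
\begin{enumerate}
\item[(i)] Write $d=\ell+\beta$ with $\ell\in\mathbb{N}$ and $0<\beta\le1$. For each $p\in(0,\pi)$, $S^\top S$ is compared, up to a symmetric perturbation of rank at most $2\ell$, with a polynomial in the second-difference matrix $T_n(s)$. This polynomial is built from the tangent line of the concave map $x\mapsto x^\beta$ at $s(p)$, so that it takes the value $s(p)^d$ on the sine vector with $T_n(s)$-eigenvalue $s(p)$. The companion comparison for $SS^\top$ gives the reverse direction.
\item[(ii)] These comparison matrices are exactly diagonal in the sine basis, so they commute with $R_n(g)$. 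Choosing $p=\sfx_{n+1-j}$ separately for each index and applying Lemma~\ref{thm_aux_interlacing} gives two-sided eigenvalue bounds with index shift $2\ell$, independent of $n$ and $\nu$.
\item[(iii)] Every summand is bounded (by $\|g\|_\infty$, and by $1/4$ for $\nu x(1+\nu x)^{-2}$), so this shift costs only $\OO(1)$.
\item[(iv)] Splitting $g$ by a Jordan decomposition into bounded decreasing pieces makes the eigenvalue orderings of $R_n(g)$ and of the comparison matrices agree. This is how the paper addresses the eigenvector issue.
\item[(v)] The bias trace then follows from $g\equiv1$ via $\nu A(I+\nu A)^{-2}=(I+\nu A)^{-1}-(I+\nu A)^{-2}$.
\end{enumerate}
Without this comparison, or some equivalent quantitative control of both the small eigenvalues of $M$ and their eigenvectors, your outline proves the reduction to traces but not the proposition.
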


\begin{proof}[Proof of Theorem \ref{thm_mainthm}]
We apply Proposition \ref{propo_mainprop} separately in the three
regimes.

Suppose first that \(d>1/2\). Proposition
\ref{propo_mainprop} gives
\[
\mathcal{B}
=
\mathsf{c}_2
\left(\frac{\nu^{1/(2d)}}{n}\right)^{2d-1}
[1+\mathrm{o}(1)], \ \ \
\mathcal{V}
=
\mathsf{c}_1\nu^{-1/(2d)}
[1+\mathrm{o}(1)].
\]
Since both leading terms are nonnegative, we can therefore obtain that 
\[
\begin{aligned}
\mathcal{B}+\mathcal{V}
&=
\left[
\mathsf{c}_1\nu^{-1/(2d)}
+
\mathsf{c}_2
\left(\frac{\nu^{1/(2d)}}{n}\right)^{2d-1}
\right]
[1+\mathrm{o}(1)],
\end{aligned}
\]
which proves (\ref{eq_dgeaterthan12}).

Next, let \(d=1/2\). Proposition
\ref{propo_mainprop} yields
\[
\mathcal{B}
=
\frac{\tau^2\nu^2}{n\log n}
\sum_{j=1}^n
\frac{s(\mathsf{x}_j)^{1/2}}
{[1+\nu s(\mathsf{x}_j)^{1/2}]^2}
+
\mathrm{O}\left(\frac{\nu}{n\log n}\right), \ \ \ 
\mathcal{V}
=
\frac{1}{n}
\sum_{j=1}^n
\frac{f(\mathsf{x}_j)}
{[1+\nu s(\mathsf{x}_j)^{1/2}]^2}
+
\mathrm{O}(n^{-1}).
\]
Adding these two expressions proves
(\ref{eq_dhalfrefined}). To show (\ref{eq_dexactly1/2}), by Lemma \ref{lemma_boundary_scaled_sums} and Theorem \ref{thm_maintechinicaltheorem}  of the appendix,  we have that
\[
\mathcal{B}
=
\frac{\tau^2}{\pi}
\frac{\log \nu}{\log n}
[1+\mathrm{o}(1)],
\ \
\mathcal{V}
=
\frac{f(0)}{\pi}\nu^{-1}[1+\mathrm{o}(1)].
\]
Moreover,
\[
\frac{\mathcal{V}}{\mathcal{B}}
=
\mathrm{O}\left(
\frac{\log n}{\nu\log \nu}
\right)
=
\mathrm{o}(1),
\]
under the additional condition
\(\log n/(\nu\log \nu)\to0\). The remainder in
(\ref{eq_dhalfrefined}) is also negligible relative to
\(\log \nu/\log n\), since \(\nu/n\to0\). This establishes (\ref{eq_dexactly1/2}).

Finally, when \(0<d<1/2\), Proposition
\ref{propo_mainprop} gives
\[
\mathcal{B}
=
\frac{\tau^2(1-2d)\nu^2}{\pi}
\int_0^\pi
\frac{(2-2\cos u)^d}
{[1+\nu(2-2\cos u)^d]^2}
\,\mathrm{d}u
[1+\mathrm{o}(1)],
\]
and
\[
\mathcal{V}
=
\frac{1}{\pi}
\int_0^\pi
\frac{f(u)}
{[1+\nu(2-2\cos u)^d]^2}
\,\mathrm{d}u
[1+\mathrm{o}(1)].
\]
Since \(\nu\asymp1\), both leading terms are finite and
nonnegative. This proves (\ref{eq_dlessthan12}).
%
\end{proof}

We now provide a few remarks below.


%
%
\begin{remark}
When $0<d<1/2$, the nontrivial regime under Assumption \ref{assum_mainassumption} is $\nu\asymp1$. In this regime, both terms in (\ref{eq_dlessthan12}) remain non-negligible, so the MSE does not vanish. Thus minimizing (\ref{eq_optimizationproblem}) cannot yield a consistent trend estimator in this range of $d$.

%

%
\end{remark}
\begin{remark}\label{rmk_tecnihcalwls}
In Proposition \ref{propo_mainprop} and Theorem \ref{thm_mainthm}, we analyze the standard MSE, defined as the Euclidean distance between the true random trend $\bm{\mu}$ and its estimate $\widehat{\bm{\mu}}$. 
It is also of interest to consider the \emph{weighted MSE}, i.e., the quadratic form \cite{carroll} 
$
(\widehat{\bm{\mu}}-\bm{\mu})^{\prime}R_{\varepsilon}^{-1}(\widehat{\bm{\mu}}-\bm{\mu}),
$
where $R_{\varepsilon}$ is as in (\ref{eq_estimtorwls}). 
If the spectral density of $\{\varepsilon_t\}$ is bounded away from zero and infinity — which holds, for example, for the usual ARMA model — then the theory of Toeplitz matrices \cite{grenander_szego_1958} implies that all eigenvalues of $R_{\varepsilon}$ are bounded between two positive constants. 
In this case, there exist constants $k_{1}>0$ and $k_{2}>0$ such that 
$$
k_{1}\|\widehat{\bm{\mu}}-\bm{\mu}\|^{2}
\ \leq\ 
(\widehat{\bm{\mu}}-\bm{\mu})^{\prime}R_{\varepsilon}^{-1}(\widehat{\bm{\mu}}-\bm{\mu})
\ \leq\ 
k_{2}\|\widehat{\bm{\mu}}-\bm{\mu}\|^{2}.
$$
Consequently, all asymptotic convergence rate results established for the standard MSE also hold when the distance is measured by the weighted MSE. We will pursue this direction in the future work.   
\end{remark}

\section{ Practical selection of the parameters}\label{sec_selectionofparameters}

In this section, we develop a practical, data-driven rule for selecting the
penalty parameter \(\nu\) and the differencing order \(d\) when
\(d\geq1/2\).  For $d < 1/2$, the signal component is asymptotically
negligible relative to the observation noise, so consistent estimation of
$d$ is not feasible in this setting. The procedure is motivated by the Mallows-type criterion in
Section 3 of \cite{burman_shumway2009estimation}.

Recall the estimator in (\ref{eq_estimatefinal}). For each
\((\nu,d)\in(0,\infty)\times[1/2,\infty)\), let
\begin{equation*}
H_{\nu,d}
=
\left(
I+\nu\bigl(S^{(r-1)}\bigr)^{\top}S^{(r-1)}
\right)^{-1},
\qquad
\widehat{\bm{\mu}}(\nu,d)
=
H_{\nu,d}\bm{Y},
\end{equation*}
where \(r=\lceil d\rceil+1\). Let $\mathcal{G}_n
\subset
(0,\infty)\times[1/2,\infty)$ be a finite candidate grid. In implementation, the candidate orders \(d\)
are chosen on a user-specified grid in \([1/2,d_{\max}]\), and, for each
candidate \(d\), the candidate penalty parameters \(\nu\) are chosen on a
logarithmic grid over a range determined by the sample size \(n\).

Let $\widehat f(u)$ be an estimator of the spectral density of $\{\varepsilon_t\}$, obtained, for example, from a preliminary trend estimate and a fitted short-range-dependent error model.  Such a residual
spectral-density estimator is standard; see, for example,
\cite[Section 3]{burman_shumway2009estimation} and \cite{brockwell2016introduction}. Following \cite[Section 3]{burman_shumway2009estimation}, our primary criterion is
\begin{equation}\label{eq_selectioncriterion}
\phi_n(\nu,d)=\frac{1}{n}\left\|\bm{Y}-\widehat{\bm{\mu}}(\nu,d)\right\|_2^2+\frac{2}{n}\sum_{j=1}^n\frac{\widehat f(\sfx_j)}{1+\nu s(\sfx_j)^d},
\end{equation}
where we recall (\ref{eq_points}) and (\ref{eq_su}). 
We then select
\begin{equation}\label{eq_selectedparameters}
(\widehat\nu,\widehat d)\in\arg\min_{(\nu,d)\in\mathcal{G}_n}\phi_n(\nu,d).
\end{equation}
In our implementation, \(\widehat f\) is obtained by fitting autoregressive
models to preliminary residuals and selecting the order by BIC.

For large $\nu$, the weights in the correction term concentrate near frequency zero. Thus, when $\widehat f(u)$ is sufficiently smooth near zero, the full-spectrum criterion in (\ref{eq_selectioncriterion}) has the simpler approximation
\begin{equation}\label{eq_selectioncriterionzero}
\phi_n^{(0)}(\nu,d)=\frac{1}{n}\left\|\bm{Y}-\widehat{\bm{\mu}}(\nu,d)\right\|_2^2+\frac{2\widehat f(0)}{n}\sum_{j=1}^n\left(1+\nu s(\sfx_j)^d\right)^{-1}.
\end{equation}
The full-spectrum criterion is preferred in implementation; (\ref{eq_selectioncriterionzero}) is useful as a computational simplification and for asymptotic interpretation.

Before concluding this section, we summarize the implementation of the
selection procedure. Let $k=\lceil n^{1/2}\rceil.$ Here, \(k\) is the half-width of the local fitting window, so that the
interior window contains \(2k+1\) observations.

\begin{enumerate}
\item For each \(t=1,\ldots,n\), define
\begin{equation*}
\mathcal{I}_t
=
\left\{
\max(1,t-k),\ldots,\min(n,t+k)
\right\}.
\end{equation*}
Fit a local linear regression to the observations
\(\{Y_j:j\in\mathcal{I}_t\}\) by minimizing
$\sum_{j\in\mathcal{I}_t}
\left\{
Y_j-a-b(j-t)
\right\}^2,$
over \(a\) and \(b\), and let \(\widehat{\mu}_t^{+}\) be the resulting
intercept estimate \(\widehat a\). This automatically provides the
appropriate boundary modification.

\item Form the preliminary residuals
\begin{equation*}
\widetilde{\varepsilon}_t
=
Y_t-\widehat{\mu}_t^{+},
\qquad
t=1,\ldots,n.
\end{equation*}

\item Fit \(\operatorname{AR}(p)\) models to
\(\{\widetilde{\varepsilon}_t\}_{t=1}^n\) over a prespecified collection of
orders. Select the order \(\widehat p\) using BIC, and let
\(\widehat f\) denote the spectral density associated with the fitted
\(\operatorname{AR}(\widehat p)\) model.

\item Evaluate the criterion in (\ref{eq_selectioncriterion}) at every
\((\nu,d)\in\mathcal{G}_n\), and let
\begin{equation*}
(\widehat{\nu},\widehat d)
\in
\arg\min_{(\nu,d)\in\mathcal{G}_n}
\phi_n(\nu,d).
\end{equation*}
Finally, report $\widehat{\bm{\mu}}(\widehat{\nu},\widehat d)$ in (\ref{eq_estimatefinal}).
\end{enumerate}

\section{Numerical studies}\label{sec_numericalstudy}

In this section, we conduct Monte Carlo experiments using the exact finite-sequence fractional operators in \eqref{eq_definitionoperator} and the estimator in \eqref{eq_estimatefinal}. Throughout, we set
\begin{equation*}
\tau^2=\sigma_\varepsilon^2=1.
\end{equation*}
The innovation sequences \(\{\gamma_t\}\) in \eqref{eq_differencestandardform} and \(\{\varepsilon_t\}\) in \eqref{eq_basicmodel} are Gaussian sequences, and \(\sigma_\gamma^2\) is specified according to \eqref{eq_sigmaform}. The penalty parameter is set to its model value \(\nu=\sigma_\varepsilon^2/\sigma_\gamma^2\), as in \eqref{eq_optimizationproblem}.

In what follows, we consider different combinations of \(d\) and \(n\), where \(d\in\{0.25,0.5,0.75,1,1.5\}\) and \(n\in\{400,600,800,1000,1200,1400,1600,1800,2000,2200\}\), and generate \(500\) independent data sets for each combination. In each replication, we construct \(\bm{\mu}=\calS_d\bm{\gamma}\) and compute \(\widehat{\bm{\mu}}\) from \eqref{eq_estimatefinal}. The replication-specific per-coordinate squared error is
\begin{equation*}
\operatorname{MSE}_{n,m}
=
\frac{1}{n}
\left\|
\widehat{\bm{\mu}}^{(m)}-\bm{\mu}^{(m)}
\right\|_2^2,
\qquad
m=1,\ldots,500.
\end{equation*}
The empirical MSE reported below is the average of these \(500\) values.

First, we compare the empirical MSE with the theoretical approximation derived in Theorem \ref{thm_mainthm}. In this experiment, the differencing order, penalty parameter, and error spectrum are known. Thus, the comparison directly assesses the finite-sample accuracy of the asymptotic risk approximation. The results are reported in Figure \ref{fig:phase-transition}.

The theoretical approximations agree closely with the empirical MSEs, and their accuracy generally improves as \(n\) increases, supporting the asymptotic results in Theorem \ref{thm_mainthm}. Moreover, the figure illustrates the phase transition predicted by the theory. When \(d>1/2\), the risk decreases polynomially with \(n\). At the boundary \(d=1/2\), the decrease is substantially slower. When \(d<1/2\), the risk approaches a positive limiting value. Across the displayed sample sizes, the theoretical approximations capture these distinct finite-sample behaviors.

\begin{figure}[!ht]
\centering
\includegraphics[width=3.8in, height=2.5in]{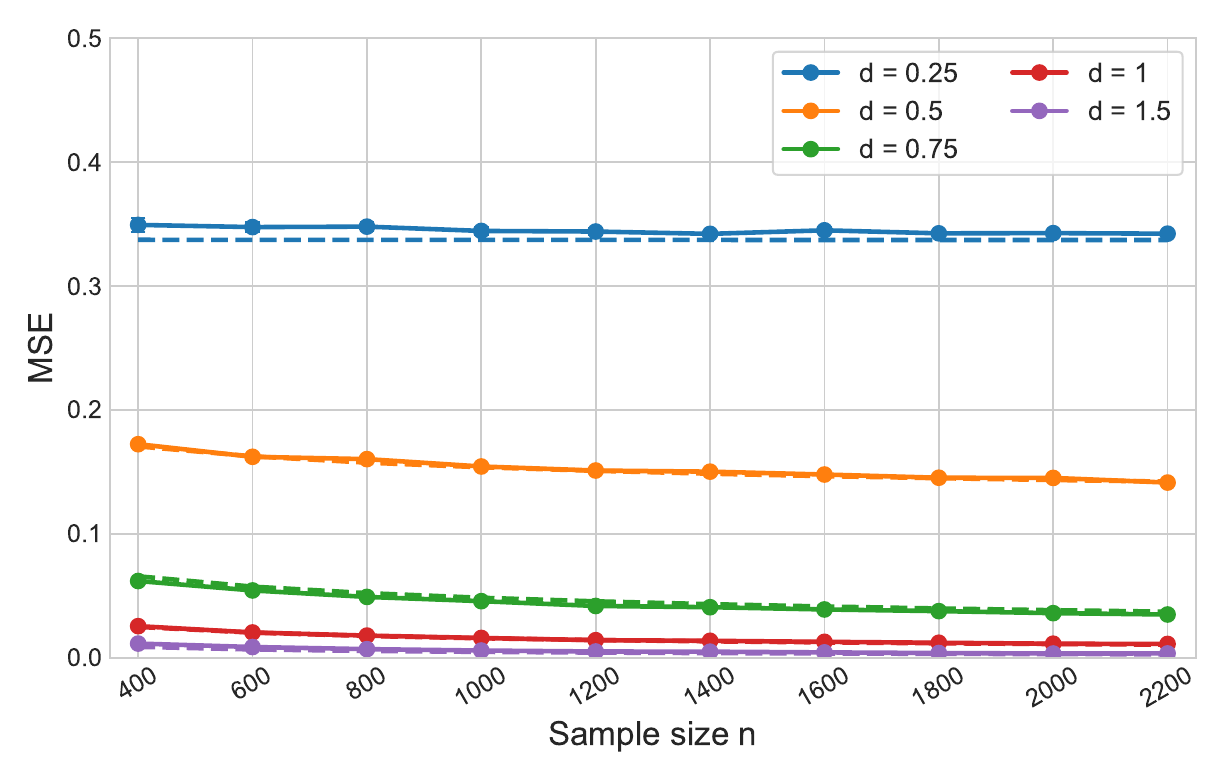}
\caption{Empirical MSE (points) and theoretical approximation (dashes). }
\label{fig:phase-transition}
\end{figure}

Second, we examine the practical selection procedure described in Section \ref{sec_selectionofparameters}. We consider data-generating trend orders \(d\in\{0.5,0.75,1\}\) under three error specifications: white noise; an \(\operatorname{AR}(1)\) process satisfying \(\varepsilon_t=\phi\varepsilon_{t-1}+w_t\); and an \(\operatorname{ARMA}(1,1)\) process satisfying \(\varepsilon_t=\phi \varepsilon_{t-1}+\theta w_{t-1}+w_t\), where \(\{w_t\}\) is a Gaussian white noise sequence. As outlined at the end of Section \ref{sec_selectionofparameters}, for each simulated data set, we first obtain a preliminary local-linear trend estimate. We then fit autoregressive models of orders \(0,\ldots,5\) to the resulting residuals, select the order using BIC, and use the spectral density of the selected autoregressive model as \(\widehat f\) in \eqref{eq_selectioncriterion}. The candidate penalty values are \(\nu\in\{0.5,1,1.5,2,2.5\}\log n\) when \(d=0.5\), and \(\nu\in\{0.5,1,1.5,2,2.5\}n^{2d-1}\) when \(d>0.5\). For each replication, let \((\widehat\nu,\widehat d)\) denote the resulting fitted pair.

We compute the empirical MSE of \(\widehat{\bm{\mu}}(\widehat\nu,\widehat d)\) in \eqref{eq_estimatefinal} and compare it with the theoretical risk approximation evaluated at the same fitted pair. The latter uses \eqref{eq_boundaryrefined} when \(\widehat d=0.5\) and the right-hand side of \eqref{eq_dgeaterthan12} when \(\widehat d>0.5\). The known simulation spectrum is used only to evaluate this theoretical benchmark; the parameter-selection procedure itself uses the residual-based BIC--AR estimate \(\widehat f\).

Figure \ref{fig:selection-plugin} shows that, after parameter selection, the plug-in theoretical approximation continues to track the empirical risk reasonably well for different values of $d \geq 0.5$. This comparison provides a numerical check that the practical residual-based implementation selects penalties in a range for which the risk approximation remains informative. 


\begin{figure}[t]
\centering
\hspace{-0.1in}
\includegraphics[width=1.02\textwidth]{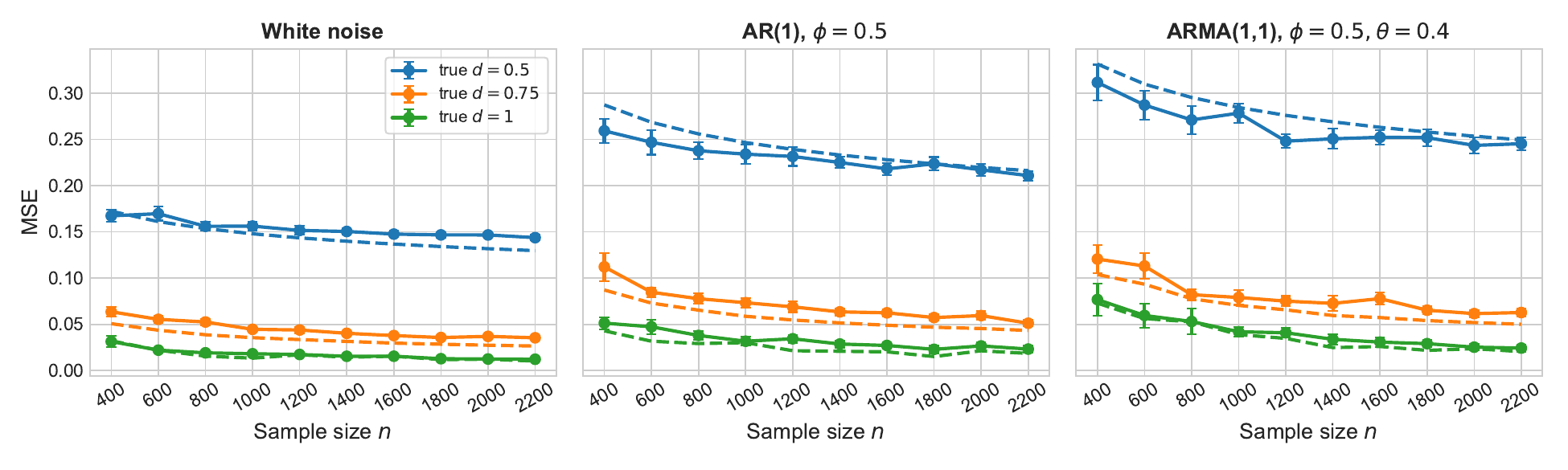}
\caption{Empirical MSE (points) and theoretical approximation evaluated at the fitted pair (dashes).  Error bars are Monte Carlo \(95\%\) confidence intervals.}
\label{fig:selection-plugin}
\end{figure}

\appendix

\section{Proof of Proposition \ref{propo_mainprop}}\label{appendxi_mainprop}

\subsection{Some preliminary results}
In this section, we establish auxiliary results used in the proof of Proposition \ref{propo_mainprop}. The following theorem provides a precise approximation for the bias and variance components in (\ref{eq_biasandvariancedefinition}) whose proof will be given in Appendix \ref{appendix_techinmain}. Recall $\sfx_j$ and $s(\cdot)$ from (\ref{eq_points}) and (\ref{eq_su}).
Recall the spectral density function $f(\cdot)$ in (\ref{eq_spectraldensity}) and the bias-variance decomposition in (\ref{eq_biasandvariancedefinition}). 
\begin{theorem}\label{thm_maintechinicaltheorem}
Suppose the assumptions of Proposition \ref{propo_mainprop} hold. Then we have that for all $\nu>0$
\begin{equation}\label{eq_varaincepart}
 \mathbb{E}\|\widehat{\bm{\mu}}-\overline{\bm{\mu}}\|^{2}=\sum_{j=1}^n (1+\nu
s(\sfx_{j})^{d})^{-2}f(\sfx_{j})+\OO(1),   
\end{equation}
where $\sfx_j$ and $s(\cdot)$ are defined in (\ref{eq_points}) and (\ref{eq_su}), respectively. Moreover, we have that 
\begin{equation}\label{eq_biaspart}
\mathbb{E}\|\overline{\bm{\mu}}-\bm{\mu} \|^{2}=\sigma _{\gamma }^{2}\nu \sum_{j=1}^n
(1+\nu s(\sfx_{j})^{d})^{-2}\nu s(\sfx_{j})^{d} +\OO((\sigma _{\gamma }^{2}\nu )).
\end{equation}
\end{theorem}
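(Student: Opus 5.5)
We will work directly with the matrix $A:=(S^{(r-1)})^\top S^{(r-1)}$ and the smoother $H=(I+\nu A)^{-1}$. Then $\overline{\bm\mu}=H\bm\mu$ and $\widehat{\bm\mu}-\overline{\bm\mu}=H\bm\varepsilon$. Writing $R_\varepsilon=\operatorname{Cov}(\bm\varepsilon)$, the two quantities in Theorem \ref{thm_maintechinicaltheorem} are
\begin{equation*}
\mathbb{E}\|\widehat{\bm{\mu}}-\overline{\bm{\mu}}\|^2=\operatorname{tr}(H^2R_\varepsilon),\qquad
\mathbb{E}\|\overline{\bm{\mu}}-\bm{\mu}\|^2=\mathbb{E}\|(I-H)\bm\mu\|^2 .
\end{equation*}
For the bias we use $I-H=\nu H A$. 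The part $\bm\mu_2=\mathcal{S}_d\bm\gamma=S^{-1}\bm\gamma$ is handled as follows. Since $S^{(r-1)}S^{-1}$ simply deletes the first $r-1$ coordinates, we have $A S^{-1}\bm\gamma=(S^{(r-1)})^\top\bm\gamma^{(r-1)}$. This gives $\mathbb{E}\|(I-H)\bm\mu_2\|^2=\sigma_\gamma^2\nu^2\operatorname{tr}(H^2A)$ up to the removed $r-1$ rows. For $\bm\mu_1$, the columns $\mathcal{S}_{d-1-j}\bm 1$ are annihilated by $S^{(r-1)}$ (this is the reason for the choice $r=\lceil d\rceil+1$), so $(I-H)\bm\mu_1=0$. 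The effect of the deleted rows and of the cross terms is a rank-$(r-1)$ perturbation, and we will show that it contributes only $\OO(\sigma_\gamma^2\nu)$, using $\|\nu HA\|\le 1$ and $\|\nu^{1/2}H A^{1/2}\|\le 1/2$. Because $\nu^2\operatorname{tr}(H^2A)=\nu\operatorname{tr}(H^2\,\nu A)$, both claims reduce to showing that, for $g$ equal to $(1+\nu x)^{-2}$ or $\nu x(1+\nu x)^{-2}$,
\begin{equation*}
\operatorname{tr}\big(g(A)\big)=\sum_{j=1}^n g\big(s(\sfx_j)^d\big)+\OO(1),
\end{equation*}
together with the analogous statement for $\operatorname{tr}(g(A)R_\varepsilon)$ with weights $f(\sfx_j)$.

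The key step is spectral. We compare $A$, or rather $SS^\top$, with the symmetric matrix $T_d:=(T_1)^{d}$. Here $T_1$ is the second-difference (discrete Laplacian) Toeplitz matrix with eigenvalues $s(\sfx_j)=2(1-\cos\sfx_j)$ and sine eigenvectors $\phi_j(t)=\sqrt{2/(n+1)}\sin(t\sfx_j)$; this explains the grid $\sfx_j=\pi(n+1-j)/(n+1)$. The plan is to show that $A-T_d$ is small in the sense needed for trace functionals with bounded, Lipschitz-in-$\log$ $g$. Concretely, $S$ is the lower-triangular Toeplitz matrix with symbol $(1-e^{\mathrm{i}\omega})^d$, so $S^\top S$ is a Toeplitz matrix with symbol $|1-e^{\mathrm{i}\omega}|^{2d}=s(\omega)^d$ plus a Hankel-type boundary correction. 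The Toeplitz part $T(s^d)$ agrees with the sine-diagonalized (tau-algebra) matrix up to a Hankel correction whose entries decay like $|t+j|^{-1-2d}$.

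We then bound $\operatorname{tr} g(\nu A)-\operatorname{tr} g(\nu T_d)$ by $\OO(1)$ uniformly in $\nu$. For this we use the resolvent identity
\begin{equation*}
H-H_T=\nu H(T_d-A)H_T,\qquad H_T:=(I+\nu T_d)^{-1},
\end{equation*}
and the fact that $\nu^{1/2}H A^{1/2}$ and $\nu^{1/2}H_TT_d^{1/2}$ are contractions. This reduces the problem to showing that the boundary correction is trace-class with bounded trace norm relative to these weights, using the summable decay of its entries. The same argument with $R_\varepsilon$ inserted handles the variance. There we replace $R_\varepsilon$ by $\sum_j f(\sfx_j)\phi_j\phi_j^\top$ plus an error. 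The assumption $\rho(h)=\OO(|h|^{-2-\delta})$ gives $\sum_h|h||\rho(h)|<\infty$, which makes $R_\varepsilon$ minus its sine-diagonal approximant trace-class with $\OO(1)$ trace norm, and hence contributes $\OO(1)$ since $\|H^2\|\le 1$.

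The main obstacle is controlling the boundary correction uniformly in $\nu$ for non-integer $d$. There the entries of $\binom{-d}{k}$ decay only like $k^{-d-1}$, and for $d$ near or below $1/2$ the Hankel correction is not obviously trace-class. We will try first to show that the correction has trace norm $\OO(1)$ when weighted by the contractions above, by writing it as a sum of rank-one pieces supported near the boundaries and using the summability of $k^{-1-2d}$. If that fails for small $d$, the fallback is to use the exact factorization $S=\mathcal{S}_{-d}$ and to interlace the eigenvalues of $A$ with those of $T_d$ via a finite-rank perturbation argument. For monotone $g$, eigenvalue interlacing under a rank-$m$ perturbation shifts $\sum g$ by at most $m\|g\|_\infty=\OO(1)$.
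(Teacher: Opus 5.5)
\textbf{There is a genuine gap.} Your reductions agree with the paper's.

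\emph{Bias.} The identity $\mathbb{E}\|\overline{\bm\mu}-\bm\mu\|^2=\sigma_\gamma^2\nu^2\operatorname{tr}(H^2A)$ is in fact exact. Since $S^{(r-1)}\bm\mu_1=\bm 0$ and $S^{(r-1)}\calS_d=[0\;\,I_{n-r+1}]$, there are no cross terms and no deleted-row error to control.

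\emph{Variance.} Replacing $R_\varepsilon=T_n(f)$ by its sine-diagonal approximant at $\OO(1)$ trace cost, via $\sum_h|h\rho(h)|<\infty$, is exactly the paper's step.

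\emph{The gap.} It sits in the step everything rests on: $\operatorname{tr}g(A)=\sum_j g(s(\sfx_j)^d)+\OO(1)$ and its $R_n(f)$-weighted analogue, uniformly in $\nu$, for non-integer $d$. Your primary route does not deliver this.
\begin{itemize}
\item Even granting $\|T_d-A\|_1=\OO(1)$, the resolvent identity gives only $|\operatorname{tr}(H-H_T)|\le\nu\|T_d-A\|_1=\OO(\nu)$. This is useless because $\nu\to\infty$ in parts (1)--(2) of Theorem \ref{thm_mainthm}.
\item Using the contractions $\nu^{1/2}HA^{1/2}$ and $\nu^{1/2}H_TT_d^{1/2}$ instead requires a trace-norm bound on the boundary correction sandwiched between $(S^\top S)^{-1/2}$ and $T_d^{-1/2}$. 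That is, the correction is weighted by inverse eigenvalues as small as $n^{-2d}$. Such a bound does not follow from entrywise decay like $|t+j|^{-1-2d}$, and you give no argument for it.
\item Your worry about $d\le 1/2$ is misplaced. For $m>d$ the Fourier coefficients of $s^d$ equal $-\pi^{-1}\sin(\pi d)\int_0^1x^{m-d-1}(1-x)^{2d}\,\dd x$, a one-signed moment sequence. So the Hankel parts are semidefinite up to finite rank, with absolutely summable trace. The truncation term has trace at most $\sum_k k\binom{d}{k}^2<\infty$. The real obstruction is uniformity in $\nu$, not trace-class membership.
\end{itemize}

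\emph{The fallback.} It fails outright. For non-integer $d$ there is no bounded-rank relation between $S^\top S$ (or $SS^\top$) and $T_1^d$: the Hankel-type corrections you identified have rank that is not bounded in $n$. So rank-$m$ interlacing is unavailable. Moreover, eigenvalue interlacing alone cannot control $\operatorname{tr}(g(A)R_n(f))$, which depends on eigenvectors.

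\emph{The missing idea.} The paper imports it from \cite{BURMAN2006677}: never compare the fractional power with a tau matrix by a small or low-rank perturbation.
\begin{itemize}
\item Write $d=\ell+\beta$ with $0<\beta\le1$. For each fixed $p\in(0,\pi)$, compare $S^\top S$ one-sidedly, up to a symmetric perturbation of rank at most $2\ell$, with a matrix that is a polynomial in $T_1=T_n(s)$. This matrix is built from the tangent line of the concave map $x\mapsto x^\beta$ at $s(p)$.
\item That matrix is diagonal in the sine basis, and its eigenvalue at $\sfx_j$ equals $s(\sfx_j)^d$ when $p=\sfx_j$.
\item Interlacing then bounds each eigenvalue of $(I+\nu A)^{-1}R_n(g)(I+\nu A)^{-1}$ from one side. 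Choosing $p$ eigenvalue by eigenvalue makes the bound sharp.
\item The companion comparison for $SS^\top$ gives the reverse inequality.
\item A Jordan decomposition reduces general $g$ to the monotone case needed for the ordering.
\end{itemize}
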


The following lemma establishes the asymptotic behavior of the Riemann-type sums in \eqref{eq_varaincepart} and \eqref{eq_biaspart}.

\begin{lemma}
\label{lemma_general_scaled_sum}
Let
\[
h_{\mathrm{V}}(z)=\frac{1}{(1+z)^2},
\qquad
h_{\mathrm{B}}(z)=\frac{z}{(1+z)^2},
\]
and let \(g\) be continuously differentiable on \([0,\pi]\). Define
\begin{equation}\label{eq_someimportantnotations}
a_n=\frac{\pi\nu^{1/(2d)}}{n+1},
\qquad
b_n=\frac{\pi n\nu^{1/(2d)}}{n+1},
\qquad
\psi(x)=\left(\frac{\sin(x/2)}{x/2}\right)^{2d}.
\end{equation}
Then, for every \(d>0\), \(\nu>0\) and
\(h\in\{h_{\mathrm{V}},h_{\mathrm{B}}\}\)
\[
\begin{aligned}
\sum_{j=1}^n h(\nu s(\sfx_j)^d)g(\sfx_j)=
\frac{(n+1)\nu^{-1/(2d)}}{\pi}
\int_{a_n}^{b_n}
h(w^{2d}\psi(\nu^{-1/(2d)}w))
g(\nu^{-1/(2d)}w)\,\mathrm{d}w
+\mathrm{O}(1).
\end{aligned}
\]
\end{lemma}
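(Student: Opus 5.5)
The plan is to undo the change of variables on the right-hand side, so that the statement becomes a plain Riemann-sum estimate on $[0,\pi]$, and then to control the Riemann-sum error by a total-variation bound that does not depend on $\nu$ or $n$.

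\emph{Step 1: reduction to a Riemann sum.} Substitute $x=\nu^{-1/(2d)}w$, so that $\mathrm{d}w=\nu^{1/(2d)}\mathrm{d}x$. The limits $a_n,b_n$ in (\ref{eq_someimportantnotations}) become $\Delta:=\pi/(n+1)$ and $n\Delta$. Since $s(x)=4\sin^2(x/2)$, we have $s(x)^d=x^{2d}\psi(x)$, and therefore $w^{2d}\psi(\nu^{-1/(2d)}w)=\nu\, s(x)^d$. The right-hand side of the lemma is thus
\[
\frac{1}{\Delta}\int_{\Delta}^{n\Delta}F(x)\,\mathrm{d}x+\mathrm{O}(1),\qquad F(x):=h(\nu s(x)^d)\,g(x).
\]
By (\ref{eq_points}) the points $\sfx_j$, $1\le j\le n$, are exactly $k\Delta$ for $k=1,\dots,n$. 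It therefore suffices to show
\[
\Big|\sum_{k=1}^n F(k\Delta)-\frac1\Delta\int_\Delta^{n\Delta}F(x)\,\mathrm{d}x\Big|\le C,
\]
with $C$ independent of $n$ and $\nu$.

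\emph{Step 2: elementary Riemann-sum bound.} Write
\[
\Delta\sum_{k=1}^n F(k\Delta)-\int_\Delta^{n\Delta}F=\sum_{k=1}^{n-1}\int_{k\Delta}^{(k+1)\Delta}\big(F(k\Delta)-F(x)\big)\,\mathrm{d}x+\Delta F(n\Delta).
\]
Each integrand is bounded by the variation of $F$ on $[k\Delta,(k+1)\Delta]$. Hence the whole expression is at most $\Delta\big(\mathrm{TV}_{[0,\pi]}(F)+\sup|F|\big)$. Dividing by $\Delta$ reduces the claim to proving that $\sup|F|$ and $\mathrm{TV}_{[0,\pi]}(F)$ are bounded uniformly in $\nu>0$.

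\emph{Step 3: uniform variation bound.} This is the main point, since $\nu$ may diverge and $h(\nu s^d)$ then becomes a sharp transition near $0$. The key observation is that variation is invariant under monotone reparametrization.
\begin{itemize}
\item The map $x\mapsto z(x)=\nu s(x)^d$ is nondecreasing on $[0,\pi]$ because $s$ is increasing there. Hence $\mathrm{TV}_{[0,\pi]}(h\circ z)\le \mathrm{TV}_{[0,\infty)}(h)$, whatever the value of $\nu$.
\item $h_{\mathrm V}$ is decreasing from $1$ to $0$, so its total variation is $1$.
\item $h_{\mathrm B}$ increases on $[0,1]$ to $1/4$ and then decreases to $0$, so its total variation is $1/2$.
\item Both functions are bounded by $1$.
\end{itemize}
Since $g\in C^1[0,\pi]$, it is bounded and $\mathrm{TV}(g)\le\int_0^\pi|g'|<\infty$. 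The product rule for variation,
\[
\mathrm{TV}(uv)\le\sup|u|\,\mathrm{TV}(v)+\sup|v|\,\mathrm{TV}(u),
\]
then gives $\mathrm{TV}(F)\le \|g\|_\infty+\int_0^\pi|g'|$ and $\sup|F|\le\|g\|_\infty$. Both bounds are free of $\nu$, $n$ and $d$, which completes the argument with an $\mathrm{O}(1)$ remainder valid for every $d>0$ and $\nu>0$.
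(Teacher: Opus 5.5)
Your proposal is correct and follows the paper's route. The paper likewise undoes the substitution $w=\nu^{1/(2d)}u$, bounds the Riemann-sum error by the total variation of $h(\nu s(u)^d)g(u)$, and gets a $\nu$-uniform variation bound from the monotonicity of $s$, the monotone/unimodal shape of $h_{\mathrm V}$ and $h_{\mathrm B}$, and the product rule for variation. Your endpoint term $\Delta\sup|F|$ is actually needed: there are $n$ nodes but the integration interval has length only $(n-1)\Delta$, so the paper's quoted bound $\frac{\pi}{n+1}\operatorname{TV}(F)$ fails on its own for constant $F$. The omission is harmless there because $F$ is uniformly bounded, but your version states the bound correctly.
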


\begin{proof}
For \(u\in[0,\pi]\), define
\[
F_{\nu}(u)=h(\nu s(u)^d)g(u).
\]
Since
\begin{equation*}
s(u)=2(1-\cos u)=4\sin^2(u/2),
\end{equation*}
we have
\begin{equation*}
s(u)^d
=
\left(4\sin^2(u/2)\right)^d
=
u^{2d}
\left(
\frac{\sin(u/2)}{u/2}
\right)^{2d}.
\end{equation*}
Recall (\ref{eq_someimportantnotations}). This yields that $s(u)^d=u^{2d}\psi(u)$. Consequently, we see that 
\begin{equation}\label{eq_hahahere}
\begin{aligned}
&\frac{n+1}{\pi}
\int_{\pi/(n+1)}^{\pi n/(n+1)}
h(\nu s(u)^d)g(u)\,\mathrm{d}u \\
&\quad=
\frac{(n+1)\nu^{-1/(2d)}}{\pi}
\int_{a_n}^{b_n}
h(w^{2d}\psi(\nu^{-1/(2d)}w))
g(\nu^{-1/(2d)}w)\,\mathrm{d}w,
\end{aligned}
\end{equation}
where the equality follows from the change of variables
\(w=\nu^{1/(2d)}u\).

It remains to justify the replacement of the sum by the integral.
The mesh points \(\sfx_j=\pi(n+1-j)/(n+1)\) have spacing
\(\pi/(n+1)\). We use the following elementary Riemann-sum error bound for functions of
bounded variation (see Chapter 7 of \cite{Apostol1974}). If \(F\) has bounded variation on \([0,\pi]\), then
\begin{equation*}
\left|
\frac{\pi}{n+1}\sum_{j=1}^n F(\sfx_j)
-
\int_{\pi/(n+1)}^{\pi n/(n+1)}F(u)\,\mathrm{d}u
\right|
\leq
\frac{\pi}{n+1}
\operatorname{TV}_{[0,\pi]}(F).
\end{equation*}
Consequently,
\begin{equation*}
\left|
\sum_{j=1}^n F(\sfx_j)
-
\frac{n+1}{\pi}
\int_{\pi/(n+1)}^{\pi n/(n+1)}F(u)\,\mathrm{d}u
\right|
\leq
\operatorname{TV}_{[0,\pi]}(F).
\end{equation*} 
We now verify that \(\operatorname{TV}(F_{\nu})\) is uniformly
bounded. The function \(s\) is increasing on \([0,\pi]\). Moreover,
\(h_{\mathrm{V}}\) is decreasing, while \(h_{\mathrm{B}}\) increases
on \([0,1]\) and decreases on \([1,\infty)\). Hence, for either choice
of \(h\), there exists some constant $C_h>0,$
\[
\operatorname{TV}(h(\nu s(\cdot)^d))\leq C_h,
\]
uniformly in \(\nu>0\). Since \(g\) is continuously differentiable on
\([0,\pi]\), it has bounded variation. The product rule for total
variation (see Chapter 6 of \cite{Apostol1974}) therefore implies
\[
\operatorname{TV}(F_{\nu})\leq C,
\]
where \(C\) does not depend on \(n\) or \(\nu\). Consequently, together with (\ref{eq_hahahere}), we can conclude the proof. 
\end{proof}

\subsection{Technical proof of the Proposition \ref{propo_mainprop}}

Recall (\ref{eq_biasandvariancedefinition}). 
Theorem \ref{thm_maintechinicaltheorem} gives
\[
\mathcal{V}=\frac{1}{n}\sum_{j=1}^n
\frac{f(\sfx_j)}{(1+\nu s(\sfx_j)^d)^2}+\OO(n^{-1})
\]
and
\[
\mathcal{B}=\frac{\sigma_\gamma^2\nu}{n}
\sum_{j=1}^n
\frac{\nu s(\sfx_j)^d}{(1+\nu s(\sfx_j)^d)^2}
+\OO\left(\frac{\sigma_\gamma^2\nu}{n}\right).
\]

For \(d>1/2\), for the variance term, we apply Lemma \ref{lemma_general_scaled_sum}  with
\begin{equation}\label{eq_lemmaa2variancefunction}
h(z)=\frac{1}{(1+z)^2},
\qquad
g(u)=f(u).
\end{equation}
Since \(a_n\downarrow 0\), \(b_n\uparrow\infty\),
\(\psi(\nu^{-1/(2d)}w)\to1\), and
\(f(\nu^{-1/(2d)}w)\to f(0)\) for every fixed \(w\), dominated
convergence gives
\begin{equation*}
\begin{aligned}
\mathcal{V}
&=
\frac{\nu^{-1/(2d)}}{\pi}
\int_{a_n}^{b_n}
\frac{f(\nu^{-1/(2d)}w)}
{(1+w^{2d}\psi(\nu^{-1/(2d)}w))^2}
\,\mathrm{d}w
+\mathrm{O}(n^{-1}) \\
&=
\frac{f(0)\nu^{-1/(2d)}}{\pi}
\int_0^\infty\frac{1}{(1+w^{2d})^2}\,\mathrm{d}w
(1+\mathrm{o}(1))+\mathrm{O}(n^{-1}) \\
&=
\mathsf{c}_1\nu^{-1/(2d)}
(1+\mathrm{o}(1))+\mathrm{O}(n^{-1}).
\end{aligned}
\end{equation*}

Similarly, for the bias term, we apply Lemma \ref{lemma_general_scaled_sum} with
\begin{equation}\label{eq_lemma2functionsbias}
h(z)=\frac{z}{(1+z)^2},
\qquad
g(u)=1.
\end{equation}
Using (\ref{eq_sigmaform}) that
\begin{equation*}
\sigma_\gamma^2
=
\tau^2(2d-1)n^{-(2d-1)}
(1+\mathrm{o}(1)),
\end{equation*}
we obtain
\begin{equation*}
\begin{aligned}
\mathcal{B}
&=
\frac{\sigma_\gamma^2\nu^{1-1/(2d)}}{\pi}
\int_0^\infty
\frac{w^{2d}}{(1+w^{2d})^2}\,\mathrm{d}w
(1+\mathrm{o}(1))+\mathrm{O}(n^{-1}) \\
&=
\mathsf{c}_2
\left(\frac{\nu^{1/(2d)}}{n}\right)^{2d-1}
(1+\mathrm{o}(1))+\mathrm{O}(n^{-1}).
\end{aligned}
\end{equation*}

Because \(\nu^{1/(2d)}/n\to0\), the remainder
\(\mathrm{O}(n^{-1})\) is negligible relative to
\(\nu^{-1/(2d)}\), and the remainder
\(\mathrm{O}(\sigma_\gamma^2\nu/n)\) is negligible relative to
\(\sigma_\gamma^2\nu^{1-1/(2d)}\). Thus, both additive remainders are
absorbed into the respective \(\mathrm{o}(1)\) terms. This concludes the proof for $d>1/2.$

For \(d=1/2\), the identity
\(\sigma_\gamma^2=\tau^2/\log n\) gives
\[
\mathcal{B}
=\frac{\tau^2\nu^2}{n\log n}
\sum_{j=1}^n
\frac{s(\sfx_j)^{1/2}}
{(1+\nu s(\sfx_j)^{1/2})^2}
+\OO\left(\frac{\nu}{n\log n}\right).
\]
Likewise, we have 
\[
\mathcal{V}
=\frac{1}{n}\sum_{j=1}^n
\frac{f(\sfx_j)}
{(1+\nu s(\sfx_j)^{1/2})^2}
+\OO(n^{-1}).
\]
These are precisely the bias and variance expressions in the
proposition. The further asymptotic simplifications of these sums are
recorded separately in Lemma \ref{lemma_boundary_scaled_sums}.

Finally, let \(0<d<1/2\), and define
\[
h_\nu(u)=\frac{s(u)^d}{(1+\nu s(u)^d)^2},
\qquad
v_\nu(u)=\frac{f(u)}{(1+\nu s(u)^d)^2}.
\]
Since \(\nu\asymp1\), the functions \(h_\nu\) and \(v_\nu\) are uniformly
continuously differentiable on \([0,\pi]\). By the first-order
Euler--Maclaurin formula (see Chapter 7 of \cite{Apostol1974}), we have that 
\begin{equation*}
\frac{1}{n}\sum_{j=1}^n h_\nu(\sfx_j)
=
\frac{1}{\pi}\int_0^\pi h_\nu(u)\,\mathrm{d}u
+\mathrm{o}(1),
\end{equation*}
and
\begin{equation*}
\frac{1}{n}\sum_{j=1}^n v_\nu(\sfx_j)
=
\frac{1}{\pi}\int_0^\pi v_\nu(u)\,\mathrm{d}u
+\mathrm{o}(1).
\end{equation*}
Since \(\sigma_\gamma^2=\tau^2(1-2d)(1+\oo(1))\) according to (\ref{eq_sigmaform}), we obtain from (\ref{eq_biaspart}) that
\[
\mathcal{B}
=\frac{\tau^2(1-2d)\nu^2}{\pi}
\int_0^\pi
\frac{s(u)^d}{(1+\nu s(u)^d)^2}\,\mathrm{d}u
(1+\oo(1)),
\]
where we used the fact that the remainder \(\sigma_\gamma^2\nu/n\) is \(\mathrm{o}(1)\). Similarly,
\[
\mathcal{V}
=\frac{1}{\pi}\int_0^\pi
\frac{f(u)}{(1+\nu s(u)^d)^2}\,\mathrm{d}u
(1+\oo(1)).
\]
This proves the \(0<d<1/2\) part and completes the proof of Proposition \ref{propo_mainprop}.

\subsection{Boundary asymptotics at \(d=1/2\)}
The case \(d=1/2\) is the boundary between the consistent and inconsistent
regimes and requires a separate asymptotic argument. We record the required
scaled-sum approximations below which is used in the proof of Theorem \ref{thm_mainthm}.
\begin{lemma}\label{lemma_boundary_scaled_sums}
Suppose Assumption \ref{assum_mainassumption} holds and  \(d=1/2\), \(\nu\to\infty\), and \(\nu/n\to0\). Then
\[
\frac{1}{n}\sum_{j=1}^n
\frac{f(\sfx_j)}{(1+\nu s(\sfx_j)^{1/2})^2}
=\frac{f(0)}{\pi\nu}(1+\oo(1)),
\]
and
\[
\frac{\nu^2}{n}\sum_{j=1}^n
\frac{s(\sfx_j)^{1/2}}
{(1+\nu s(\sfx_j)^{1/2})^2}
=\frac{\log\nu}{\pi}(1+\oo(1)).
\]
\end{lemma}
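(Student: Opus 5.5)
Both claims are proved by applying Lemma \ref{lemma_general_scaled_sum} with \(d=1/2\), so that \(\nu^{1/(2d)}=\nu\), \(a_n=\pi\nu/(n+1)\), \(b_n=\pi n\nu/(n+1)\) and \(\psi(x)=\sin(x/2)/(x/2)\). For the variance sum I take \(h=h_{\mathrm V}\) and \(g=f\). Here \(f\) is continuously differentiable on \([0,\pi]\), because \(\rho(h)=\OO(|h|^{-2-\delta})\) makes \(\sum|h\rho(h)|<\infty\). This gives
\[
\frac1n\sum_{j=1}^n\frac{f(\sfx_j)}{(1+\nu s(\sfx_j)^{1/2})^2}
=\frac{n+1}{n\pi\nu}\int_{a_n}^{b_n}\frac{f(w/\nu)}{(1+w\psi(w/\nu))^2}\,\dd w+\OO(n^{-1}).
\]
On \([0,\pi]\) we have \(\psi\geq \psi(\pi)=2/\pi\), so the integrand is dominated by \(\sup f\cdot(1+2w/\pi)^{-2}\), which is integrable on \((0,\infty)\). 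For fixed \(w\), \(\psi(w/\nu)\to1\) and \(f(w/\nu)\to f(0)\), and \(a_n\to0\) because \(\nu/n\to0\). Dominated convergence therefore gives \(f(0)\int_0^\infty(1+w)^{-2}\,\dd w=f(0)\). Since \(f(0)>0\), the \(\OO(n^{-1})\) remainder is \(\oo(\nu^{-1})\) by \(\nu/n\to0\), and the first claim follows.

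For the bias sum, \(\nu^2 s^{1/2}/(1+\nu s^{1/2})^2=\nu\,h_{\mathrm B}(\nu s^{1/2})\). Applying the lemma with \(g\equiv1\) gives
\[
\frac{\nu^2}{n}\sum_{j=1}^n\frac{s(\sfx_j)^{1/2}}{(1+\nu s(\sfx_j)^{1/2})^2}
=\frac{n+1}{n\pi}\int_{a_n}^{b_n}\frac{w\psi(w/\nu)}{(1+w\psi(w/\nu))^2}\,\dd w+\OO(\nu/n).
\]
The remainder \(\OO(\nu/n)\) is \(\oo(1)\), so it is negligible against \(\log\nu\). Dominated convergence cannot be used here, because the integrand behaves like \(1/w\) at infinity. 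The main step is to show that this integral equals \(\log\nu\,(1+\oo(1))\). I split the range at \(w=1\) and at \(w=\nu\), which in the original variable \(u=w/\nu\) is \(u=1\).

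On \([a_n,1]\) the integrand is bounded by \(1\), so this piece is \(\OO(1)\). On \([\nu,b_n]\), with \(u=w/\nu\in[1,\pi]\), the integrand is at most \(1/(w\psi)\leq \pi/(2w)\). Its integral is therefore at most \((\pi/2)\log(b_n/\nu)\leq(\pi/2)\log\pi=\OO(1)\). On the middle range \([1,\nu]\) I write
\[
\frac{w\psi}{(1+w\psi)^2}=\frac{1}{w\psi}-\frac{2w\psi+1}{w\psi(1+w\psi)^2}.
\]
The correction term is \(\OO(w^{-2})\) uniformly, since \(\psi\geq2/\pi\), so it integrates to \(\OO(1)\). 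For the main term, \(1/\psi(x)=1+\OO(x^2)\) on \([0,1]\), so
\[
\int_1^\nu\frac{\dd w}{w\psi(w/\nu)}=\log\nu+\int_1^\nu\OO\!\left(\frac{w}{\nu^2}\right)\dd w=\log\nu+\OO(1).
\]
Adding the three pieces, the integral is \(\log\nu+\OO(1)\). Dividing by \(\pi\) and using \(\nu\to\infty\) gives \(\frac{\log\nu}{\pi}(1+\oo(1))\), which is the second claim.
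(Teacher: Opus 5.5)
Your proof is correct and follows the paper's overall route. You apply Lemma \ref{lemma_general_scaled_sum} with $d=1/2$, and for the variance sum you use dominated convergence with the lower bound $\psi\geq 2/\pi$, exactly as the paper does. Your explicit use of $f(0)>0$ to absorb the $\OO(n^{-1})$ remainder is a step the paper leaves implicit.

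The one place you diverge is the bias integral. The paper asserts, ``by an argument similar to'' the dominated-convergence step, that $\int_{a_n}^{b_n} w\psi(w/\nu)/(1+w\psi(w/\nu))^2\,\dd w=\int_{a_n}^{b_n} w/(1+w)^2\,\dd w+\OO(1)$. It then evaluates the comparison integral in closed form. You instead split at $w=1$ and $w=\nu$ and bound each piece directly:
\begin{itemize}
\item you use $\psi\geq 2/\pi$ on the outer pieces and for the correction term;
\item you use $1/\psi(x)=1+\OO(x^2)$ on $[0,1]$ for the main $1/w$ term.
\end{itemize}
Your route is the more careful one. On $[\nu,b_n]$ the difference between the two integrands is of exact order $1/w$, so no integrable dominating function exists. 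The paper's analogy with the variance step therefore does not literally go through, and a quantitative estimate like yours is needed. The paper's version buys an exact formula for the comparison integral but leaves that estimate unstated. Yours gets $\log\nu+\OO(1)$ with every error term controlled explicitly.
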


\begin{proof}
Recall (\ref{eq_someimportantnotations}).
For \(d=1/2\), we have 
\begin{equation*}
\psi(u)=\frac{\sin(u/2)}{u/2},
\qquad
a_n=\frac{\pi\nu}{n+1},
\qquad
b_n=\frac{\pi n\nu}{n+1},
\end{equation*}
and $s(u)^{1/2}=u\psi(u).$

We first consider the variance sum. Apply Lemma
\ref{lemma_general_scaled_sum} with (\ref{eq_lemmaa2variancefunction}), we have that 
\begin{equation*}
\begin{aligned}
&\frac{1}{n}\sum_{j=1}^n
\frac{f(\sfx_j)}
{(1+\nu s(\sfx_j)^{1/2})^2} \\
&\quad=
\frac{n+1}{n\pi\nu}
\int_{a_n}^{b_n}
\frac{f(\nu^{-1}w)}
{(1+w\psi(\nu^{-1}w))^2}
\,\mathrm{d}w
+\mathrm{O}(n^{-1}).
\end{aligned}
\end{equation*}
Since \(\nu/n\to0\) and \(\nu\to\infty\), we have that $a_n\longrightarrow0, \ b_n\longrightarrow\infty.$ Moreover, for every fixed \(w>0\),
\begin{equation*}
\psi(\nu^{-1}w)\longrightarrow1,
\qquad
f(\nu^{-1}w)\longrightarrow f(0).
\end{equation*}
Because \(\psi\) is continuous and strictly positive on \([0,\pi]\), there
exists a constant \(c>0\) such that
\begin{equation*}
\psi(u)\geq c,
\qquad
0\leq u\leq\pi.
\end{equation*}
Therefore,
\begin{equation*}
0\leq
\frac{f(\nu^{-1}w)}
{(1+w\psi(\nu^{-1}w))^2}
\leq
\frac{\|f\|_\infty}{(1+cw)^2}.
\end{equation*}
The dominating function is integrable on \([0,\infty)\). Hence, by dominated
convergence, we have
\begin{equation}\label{eq_exampleone}
\begin{aligned}
&\int_{a_n}^{b_n}
\frac{f(\nu^{-1}w)}
{(1+w\psi(\nu^{-1}w))^2}
\,\mathrm{d}w \\
&\quad=
f(0)\int_0^\infty\frac{1}{(1+w)^2}\,\mathrm{d}w
(1+\mathrm{o}(1)) \\
&\quad=
f(0)(1+\mathrm{o}(1)).
\end{aligned}
\end{equation}
It follows that
\begin{equation*}
\frac{1}{n}\sum_{j=1}^n
\frac{f(\sfx_j)}
{(1+\nu s(\sfx_j)^{1/2})^2}
=
\frac{f(0)}{\pi\nu}(1+\mathrm{o}(1)).
\end{equation*}

For the bias part, applying Lemma \ref{lemma_general_scaled_sum} with (\ref{eq_lemma2functionsbias}) and multiplying the resulting identity by \(\nu/n\), we obtain
\begin{equation*}
\begin{aligned}
&\frac{\nu^2}{n}\sum_{j=1}^n
\frac{s(\sfx_j)^{1/2}}
{(1+\nu s(\sfx_j)^{1/2})^2} \\
&\quad=
\frac{n+1}{n\pi}
\int_{a_n}^{b_n}
\frac{w\psi(\nu^{-1}w)}
{(1+w\psi(\nu^{-1}w))^2}
\,\mathrm{d}w
+\mathrm{O}\left(\frac{\nu}{n}\right).
\end{aligned}
\end{equation*}
Since \(\nu/n\to0\), it remains to evaluate the integral. By an argument similar to (\ref{eq_exampleone}), we have 
\begin{equation*}
\int_{a_n}^{b_n}
\frac{w\psi(\nu^{-1}w)}
{(1+w\psi(\nu^{-1}w))^2}
\,\mathrm{d}w
=
\int_{a_n}^{b_n}
\frac{w}{(1+w)^2}\,\mathrm{d}w
+\mathrm{O}(1).
\end{equation*}
Finally, by direct calculation, we have 
\begin{equation*}
\begin{aligned}
\int_{a_n}^{b_n}\frac{w}{(1+w)^2}\,\mathrm{d}w
&=
\log(1+b_n)
+\frac{1}{1+b_n}
-\log(1+a_n)
-\frac{1}{1+a_n} \\
&=
\log\nu+\mathrm{O}(1).
\end{aligned}
\end{equation*}
Consequently, as $\nu \uparrow \infty,$ we have 
\begin{equation*}
\frac{\nu^2}{n}\sum_{j=1}^n
\frac{s(\sfx_j)^{1/2}}
{(1+\nu s(\sfx_j)^{1/2})^2}
=
\frac{\log\nu}{\pi}(1+\mathrm{o}(1)).
\end{equation*}
This completes the proof.
\end{proof}

\section{Proof of Theorem \ref{thm_maintechinicaltheorem}}\label{appendix_technical_lemmas}

\subsection{Technical preparation}

In this section, we collect and prove auxiliary results that will be used in the proof of Theorem \ref{thm_maintechinicaltheorem}. We first collect some results from linear algebra. For a $n \times
 n$ symmetric matrix $A$, we denote 
its eigenvalues by $\lambda_{1}(A)\geq\cdot\cdot\cdot\geq\lambda_{n}(A),$ and its operator norm by $\|A \|.$

\begin{lemma}\label{thm_aux_interlacing}
\noindent Let $A$ and $B$ be two real symmetric matrices so that $A-B$ has rank at most $r$. Then for $r+1\leq j\leq n$ the following inequalities hold:  (a) $\lambda_{j}(A)\leq\lambda_{j-r}(B)$, and (b) $\lambda
_{j}(B)\leq\lambda_{j-r}(A)$.
\end{lemma}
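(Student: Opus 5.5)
The plan is to derive both inequalities from the Courant--Fischer min--max characterization of eigenvalues. By symmetry between $A$ and $B$, it suffices to prove (a); (b) then follows by exchanging the roles of $A$ and $B$, since $B-A$ also has rank at most $r$.

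Write $E=A-B$, so that $\ker E$ is a subspace of $\mathbb{R}^n$ of dimension at least $n-r$. We use the max--min form
\[
\lambda_j(A)=\max_{\dim U=j}\ \min_{x\in U,\ \|x\|=1} x^\top A x .
\]
Fix $j\geq r+1$ and take a $j$-dimensional subspace $U$ attaining the maximum for $A$. Then
\[
\dim(U\cap\ker E)\geq j+(n-r)-n=j-r\geq 1 .
\]
Choose any subspace $W\subseteq U\cap\ker E$ with $\dim W=j-r$. Every $x\in W$ satisfies $Ex=0$, so $x^\top Ax=x^\top Bx$. Since $W\subseteq U$, the minimum of the Rayleigh quotient over $W$ is at least its minimum over $U$. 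This gives the chain
\[
\lambda_j(A)=\min_{x\in U,\|x\|=1}x^\top Ax\leq \min_{x\in W,\|x\|=1}x^\top Ax=\min_{x\in W,\|x\|=1}x^\top Bx\leq \max_{\dim V=j-r}\ \min_{x\in V,\|x\|=1}x^\top Bx=\lambda_{j-r}(B),
\]
which is (a).

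The only delicate point is the dimension count. We need $\dim\ker E\ge n-r$, which holds because $\operatorname{rank}E\le r$. We also need the subspace-intersection inequality $\dim(U\cap K)\ge \dim U+\dim K-n$ to produce a subspace of dimension exactly $j-r\ge1$; this is where the restriction $j\ge r+1$ is used. Beyond these two facts, the argument requires only the monotonicity of the Rayleigh-quotient minimum under passage to a subspace.
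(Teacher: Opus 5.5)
Your proof is correct: the dimension count $\dim(U\cap\ker(A-B))\geq j-r$, together with the Courant--Fischer max--min characterization, gives (a), and (b) follows by exchanging the roles of $A$ and $B$. The paper gives no argument of its own and simply cites Horn--Johnson, Theorem 4.3.6; your restriction of the Rayleigh quotient to $\ker(A-B)$ is the standard min--max proof of that result, so your proposal makes the citation self-contained.
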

\begin{proof}
See \cite[Theorem 4.3.6]{horn_johnson1985matrix}. 
\end{proof}

\begin{lemma}\label{lem_aux_trace_rank}
Let $A$ and $B$ be two $n \times n$ nonnegative definite matrices with the
property $\operatorname{rank}(A-B)\leq r$. Assume that all the eigenvalues of $A$ and $B$
are bounded above by a constant $c>0$, and the values of the
integer $r$ and the constant $c$ do not depend on $n$. Then we have
\[
\sum_{1\leq j\leq n}\lambda_{j}(A)-\sum_{1\leq j\leq n}\lambda_{j}(B)=\mathrm{O}(1). 
\]
\end{lemma}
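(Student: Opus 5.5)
The plan is to compare the two spectra through the interlacing inequalities of Lemma \ref{thm_aux_interlacing}. Order the eigenvalues as $c\geq\lambda_1(A)\geq\cdots\geq\lambda_n(A)\geq0$, and similarly for $B$.

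\emph{Upper bound.} Part (a) gives $\lambda_j(A)\leq\lambda_{j-r}(B)$ for $r+1\leq j\leq n$. We split the trace of $A$ into its first $r$ eigenvalues and the rest:
\[
\sum_{j=1}^n\lambda_j(A)=\sum_{j=1}^{r}\lambda_j(A)+\sum_{j=r+1}^{n}\lambda_j(A)\leq rc+\sum_{k=1}^{n-r}\lambda_k(B)\leq rc+\sum_{k=1}^{n}\lambda_k(B).
\]
The first sum is at most $rc$ because every eigenvalue is bounded by $c$. In the last step we add back the omitted eigenvalues $\lambda_{n-r+1}(B),\dots,\lambda_n(B)$; this only increases the sum because $B$ is nonnegative definite.

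\emph{Lower bound.} Part (b) gives $\lambda_j(B)\leq\lambda_{j-r}(A)$. Applying the same split to $B$ yields
\[
\sum_{j=1}^n\lambda_j(B)\leq rc+\sum_{j=1}^n\lambda_j(A).
\]

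Combining the two bounds,
\[
\Big|\sum_{j=1}^n\lambda_j(A)-\sum_{j=1}^n\lambda_j(B)\Big|\leq rc .
\]
Since $r$ and $c$ do not depend on $n$, this bound is $\OO(1)$.

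The argument uses nonnegative definiteness in one place only: discarding or adding the $r$ smallest eigenvalues must not change the sum in the wrong direction. The uniform bound $c$ is needed only to control the $r$ largest eigenvalues that interlacing leaves unmatched. Every step is elementary once Lemma \ref{thm_aux_interlacing} is available.
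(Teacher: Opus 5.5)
Your proof is correct and takes the paper's route: the paper just says the result follows directly from Lemma~\ref{thm_aux_interlacing}, and you fill in the omitted details to get the explicit bound $|\operatorname{tr}A-\operatorname{tr}B|\leq rc$. The one case your split does not cover literally, $n\leq r$, is trivial, since both traces lie in $[0,nc]\subseteq[0,rc]$.
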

\begin{proof}
The result follows directly from Lemma \ref{thm_aux_interlacing}. 
\end{proof}

We next collect and prove auxiliary results concerning the matrix \(S\) introduced below \eqref{eq_operatorproperty2}. The analysis relies on its connection with Toeplitz and Hankel matrices. Recall that a matrix \(T=(b_{jk})\) is Toeplitz if \(b_{jk}=b_{j-k}\). If \(b_{j-k}\) is given by \(\int_{-\pi}^{\pi}\exp\{\mathrm{i}(j-k)u\}f(u)\,\mathrm{d}u/(2\pi)\), then \(f\) is called the symbol of \(T\). We write \(T(f)\) for the corresponding infinite-dimensional Toeplitz matrix and \(T_n(f)\) for its leading \(n\times n\) principal submatrix.

The finite-sample boundary correction arising in the analysis of \(S^\top S\) has Hankel structure. Accordingly, a matrix \(H=(b_{jk})\) is called a Hankel matrix if \(b_{jk}=b_{j+k}\). If \(b_{j+k}\) is given by \(\int_{-\pi}^{\pi}\exp\{\mathrm{i}(j+k)u\}f(u)\,\mathrm{d}u/(2\pi)\), then \(f\) is called the symbol of \(H\). We write \(H(f)\) for the corresponding infinite-dimensional Hankel matrix and \(H_n(f)\) for its leading \(n\times n\) principal submatrix.

With the above preparation, now we connect $S$ with the Toeplitz matrix. For \(d>0\), recall that the \((t,j)\)th entry of the matrix \(S\) is given by
\begin{equation*}
S(t,j)=\left\{
\begin{array}{cc}
(-1)^{t-j}\binom{d}{t-j}, & 1\leq j\leq t, \\
0, & j>t.
\end{array}
\right.
\end{equation*}
Equivalently, according to the generalized binomial expansion, we can write
\begin{equation*}
S(t,j)
=
\frac{1}{2\pi}
\int_{-\pi}^{\pi}
\exp(\mathrm{i}(t-j)u)
(1-\exp(-\mathrm{i}u))^d
\,\mathrm{d}u.
\end{equation*}
More precisely, let \(T(s_0^d)\) denote the infinite lower-triangular Toeplitz operator with symbol \(s_0(u)^d\), where $s_0(u)=1-\exp(-\mathrm{i}u)$. Then \(S\) is the leading \(n\times n\) principal submatrix of \(T(s_0^d)\). Recall from \eqref{eq_su} that \(s(u)=|s_0(u)|^2\). Although the corresponding infinite product has symbol \(s(u)^d\), the finite-dimensional matrix \(S^\top S\) is not exactly \(T_n(s^d)\), because truncation at the sample boundary produces a Hankel-type correction. We formally establish the approximation results as follows. 

To state the result, let \(\mathcal{W}_n\) denote the flip operator defined by
\begin{equation*}
\mathcal{W}_n(z_1,\ldots,z_n)^\top=(z_n,\ldots,z_1)^\top,
\end{equation*}
for every \(\bm{z}=(z_1,\ldots,z_n)^\top\in\mathbb{R}^n\).

\begin{lemma}\label{lem_aux_toeplitz}
Let \(T_n(g)\) be the Toeplitz matrix with symbol $g(u)=\sum_{j\in\mathbb{Z}}g_j\exp(-\mathrm{i}ju),$
where \(g_j=g_{-j}\) for every \(j\in\mathbb{Z}\) and $\sum_{j\in\mathbb{Z}}|jg_j|<\infty$. Define
\begin{equation*}
g_n(u)=\sum_{|j|\leq n}g_j\exp(-\mathrm{i}ju),
\end{equation*}
and let \(R_n(g)=(R_n(j,k;g))_{j,k=1}^n\), where we recall (\ref{eq_points}) and denote
\begin{align*}
R_n(j,k;g)
&=
\frac{2}{n+1}
\sum_{t=1}^{n}
g(\sfx_t)\sin(j\sfx_t)\sin(k\sfx_t), \ \,
1\leq j,k\leq n. 
\end{align*}
Then we have that 
\begin{equation*}
\left\|
T_n(g)-R_n(g)-H_n(g)-\mathcal{W}_nH_n(g)\mathcal{W}_n
\right\|
=
\mathrm{O}\!\left(\left\|g_n-g\right\|_\infty\right).
\end{equation*}
\end{lemma}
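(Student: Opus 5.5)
The plan is to prove the identity exactly for the trigonometric polynomial $g_n$, and then pass to $g$ using linearity in the symbol together with uniform operator-norm bounds. All four maps $h\mapsto T_n(h)$, $h\mapsto R_n(h)$, $h\mapsto H_n(h)$ and $h\mapsto \mathcal{W}_nH_n(h)\mathcal{W}_n$ are linear in $h$. So it suffices to establish two facts:
\begin{itemize}
\item[(i)] the exact identity $T_n(g_n)=R_n(g_n)+H_n(g_n)+\mathcal{W}_nH_n(g_n)\mathcal{W}_n$;
\item[(ii)] each of the four maps has operator norm at most $C\|h\|_\infty$ for bounded symbols $h$.
\end{itemize}
Applying (ii) with $h=g-g_n$ then gives the claim. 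The summability $\sum_j|jg_j|<\infty$ guarantees that $g$ is continuous and that $\|g_n-g\|_\infty\to0$, so the bound is meaningful.

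For (i), I work with the real basis $\cos(mu)$, $0\le m\le n$; this suffices because $g_j=g_{-j}$. Using $\sin a\sin b=\tfrac12[\cos(a-b)-\cos(a+b)]$ and $\cos A\cos B=\tfrac12[\cos(A-B)+\cos(A+B)]$, the entry $R_n(j,k;\cos(m\cdot))$ reduces to a combination of the sums $\frac{1}{2(n+1)}\sum_{t=1}^n\cos(p\sfx_t)$, where $p\in\{m\pm(j-k),\,m\pm(j+k)\}$. Since $\sfx_t$ runs over $\pi t'/(n+1)$ for $t'=1,\dots,n$, the discrete orthogonality relation for the full period $2(n+1)$ gives
\[
\sum_{t=1}^n\cos(p\sfx_t)=(n+1)\mathbb{1}\{p\equiv0 \bmod 2(n+1)\}-\tfrac{1+(-1)^p}{2}.
\]
The constant parity terms cancel, because $m\pm(j-k)$ and $m\pm(j+k)$ have the same parity and enter with opposite signs. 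What remains is indicator terms, which I will match to the three matrices:
\begin{itemize}
\item The indicators $p=m\pm(j-k)=0$ reproduce the Toeplitz entry of $T_n(\cos(m\cdot))$. No aliasing occurs here, since $|m\pm(j-k)|\le 2n-1<2(n+1)$.
\item The indicator $j+k=m$ produces $H_n$.
\item The indicator $j+k=2(n+1)-m$ produces the flipped Hankel $\mathcal{W}_nH_n\mathcal{W}_n$. Under the flip, $(j,k)\mapsto(n+1-j,n+1-k)$, so $j+k\mapsto 2(n+1)-(j+k)$.
\end{itemize}
These come with a sign, so the statement should be read with the Hankel convention the paper adopts (with the index shift and sign in $b_{j+k}$ as fixed by the symbol). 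I would check the bookkeeping of the index shift carefully at this point, but no estimate is involved.

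For (ii), I use three standard bounds:
\begin{itemize}
\item $\|T_n(h)\|\le\|h\|_\infty$, since $T_n(h)$ is a compression of multiplication by $h$ on $L^2(-\pi,\pi)$.
\item $R_n(h)=Q\,\mathrm{diag}(h(\sfx_1),\dots,h(\sfx_n))\,Q$, where $Q=\big(\sqrt{2/(n+1)}\sin(j\sfx_t)\big)$ is the orthogonal DST-I matrix. Hence $\|R_n(h)\|\le\max_t|h(\sfx_t)|\le\|h\|_\infty$.
\item $H_n(h)$ is a compression of multiplication by $h$ composed with the isometric flip $e^{\mathrm{i}ju}\mapsto e^{-\mathrm{i}ju}$, so $\|H_n(h)\|\le\|h\|_\infty$. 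The same bound holds for $\mathcal{W}_nH_n(h)\mathcal{W}_n$, because $\mathcal{W}_n$ is orthogonal.
\end{itemize}

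Combining (i) and (ii), the left-hand side equals the sum of the four maps applied to $g-g_n$, and is therefore at most $4\|g-g_n\|_\infty$.

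The main obstacle is step (i): verifying that the aliasing $p\equiv0 \bmod 2(n+1)$ produces exactly the flipped Hankel term, with no extra spurious terms, for all $m\le n$. The borderline case $m=n$ needs particular care, together with checking that the parity constants cancel. Everything else is routine.
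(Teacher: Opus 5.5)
Your proposal is correct and follows essentially the same route as the paper: prove the exact identity $T_n(g_n)=R_n(g_n)+H_n(g_n)+\mathcal{W}_nH_n(g_n)\mathcal{W}_n$ by discrete cosine orthogonality on the grid $\{\sfx_t\}$, then bound each linear map applied to $g-g_n$ in operator norm. The paper splits into the cases $j+k\le n$ and $j+k\ge n+2$ via the reflection $j\mapsto n+1-j$, where you track the aliased frequency $j+k=2(n+1)-m$ directly; your sign hedge is unnecessary, since with the paper's convention $H_n(j,k)=g_{j+k}$ the identity holds with plus signs exactly as stated, and the Toeplitz bound sharpens to $T_n(g-g_n)=0$ (only coefficients with $|\ell|\le n-1$ enter), which is how the paper gets the constant $3$ instead of $4$.
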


\begin{proof}
%
%
First, suppose that \(j+k\leq n\). By the definitions of the Toeplitz and Hankel entries and the identity \(\cos((j-k)u)-\cos((j+k)u)=2\sin(ju)\sin(ku)\), we have
\begin{align*}
T_n(j-k;g_n)-H_n(j+k;g_n)
&=
\frac{2}{\pi}
\int_0^\pi
\sin(ju)\sin(ku)g_n(u)\,\mathrm{d}u \\
&=
\frac{2}{n+1}
\sum_{t=1}^{n}
\sin(j\sfx_t)\sin(k\sfx_t)g_n(\sfx_t) \\
&=
R_n(j,k;g_n).
\end{align*}
Indeed, the second equality is the discrete sine-transform quadrature formula. Since \(g_n\) is an even trigonometric polynomial of degree at most \(n\), this formula follows by expanding \(g_n\) in its Fourier series and applying discrete cosine orthogonality on the grid \(\{\sfx_t\}_{t=1}^n\). The condition \(j+k\leq n\) ensures that no aliasing terms arise in this calculation.

Next, suppose that \(n+1\leq j+k\leq 2n\). Applying the same discrete sine-transform identity to the indices \(n+1-j\) and \(n+1-k\) gives
\begin{align*}
&T_n(j-k;g_n)-H_n(2n+2-j-k;g_n) \\
&\qquad =
\frac{2}{\pi}
\int_0^\pi
\sin ((n+1-j)u)
\sin ((n+1-k)u)
g_n(u)\,\mathrm{d}u \\
&\qquad =
\frac{2}{n+1}
\sum_{t=1}^{n}
\sin ((n+1-j)\sfx_t)
\sin ((n+1-k)\sfx_t)
g_n(\sfx_t) \\
&\qquad =
\frac{2}{n+1}
\sum_{t=1}^{n}
\sin(j\sfx_t)\sin(k\sfx_t)g_n(\sfx_t) \\
&\qquad =
R_n(j,k;g_n).
\end{align*}
For the penultimate equality, note that the two sine factors acquire the same sign under the replacement \(j\mapsto n+1-j\) and \(k\mapsto n+1-k\), so their product is unchanged.

Because \(g_n\) has Fourier coefficients equal to zero outside \(\{-n,\ldots,n\}\), \(H_n(j+k;g_n)=0\) whenever \(j+k>n\). Moreover, the \((j,k)\)th entry of \(\mathcal{W}_nH_n(g_n)\mathcal{W}_n\) is \(H_n(2n+2-j-k;g_n)\), which is zero whenever \(j+k\leq n+1\). Thus, the first identity applies when \(j+k\leq n\), both Hankel terms vanish when \(j+k=n+1\), and the second identity applies when \(j+k\geq n+2\). Combining these cases entrywise yields
\begin{equation*}
T_n(g_n)-R_n(g_n)
=
H_n(g_n)+\mathcal{W}_nH_n(g_n)\mathcal{W}_n.
\end{equation*}

It remains to replace \(g_n\) by \(g\). Put \(h=g-g_n\). For the Hankel term, \(H_n(h)\) is a finite compression of the Hankel operator associated with the symbol \(h\). This operator is obtained by composing multiplication by \(h\), which has operator norm \(\|h\|_\infty\), with an orthogonal projection and a reflection, both of which have operator norm one. Therefore,
\begin{equation*}
\left\|H_n(g)-H_n(g_n)\right\|
=
\left\|H_n(h)\right\|
\leq
\left\|h\right\|_\infty
=
\left\|g-g_n\right\|_\infty.
\end{equation*}

For the sine-basis term, let \(U_n\) be the orthogonal discrete sine-transform matrix with \((j,t)\)th entry \((2/(n+1))^{1/2}\sin(j\sfx_t)\). By the definition of \(R_n(\cdot)\), \(R_n(h)\) is obtained by conjugating the diagonal matrix with diagonal entries \(h(\sfx_1),\ldots,h(\sfx_n)\) by \(U_n\). Orthogonal conjugation preserves the operator norm. Hence,
\begin{equation*}
\left\|R_n(g)-R_n(g_n)\right\|
=
\left\|R_n(h)\right\|
\leq
\max_{1\leq t\leq n}|h(\sfx_t)|
\leq
\left\|g-g_n\right\|_\infty.
\end{equation*}

Moreover, \(\mathcal{W}_n\) is an orthogonal and symmetric matrix; that is, \(\mathcal{W}_n^\top=\mathcal{W}_n=\mathcal{W}_n^{-1}\). Therefore, conjugation by \(\mathcal{W}_n\) preserves the operator norm, and hence
\begin{equation*}
\left\|
\mathcal{W}_n
\bigl (H_n(g)-H_n(g_n)\bigr)
\mathcal{W}_n
\right\|
=
\left\|
H_n(g)-H_n(g_n)
\right\|.
\end{equation*}
Furthermore, the \((j,k)\)th entry of \(T_n(g)\) is \(g_{j-k}\), where \(1\leq j,k\leq n\). Thus, only Fourier coefficients \(g_\ell\) with \(|\ell|\leq n-1\) enter \(T_n(g)\). Since \(g_n\) retains all Fourier coefficients of \(g\) with \(|\ell|\leq n\), the matrices \(T_n(g)\) and \(T_n(g_n)\) have identical entries. Therefore,
\begin{equation*}
T_n(g)=T_n(g_n).
\end{equation*}
Combining this equality with the identity established above for \(g_n\), the difference in the statement of the lemma is the sum of the differences between the corresponding \(R_n\) and \(H_n\) terms. The triangle inequality, together with the preceding bounds, shows that its operator norm is bounded by \(3\|g-g_n\|_\infty\). This proves the result.
\end{proof}

The following results provide the key asymptotic expressions used in the proof of Theorem \ref{thm_maintechinicaltheorem}.
\begin{lemma}\label{thm_aux_trace_approx}
Let \(d>0\) and \(\nu>0\). Suppose that \(g:[0,\pi]\to\mathbb{R}\) is of bounded variation, and let \(R_n(g)\) be defined as in Lemma \ref{lem_aux_toeplitz}. Then, as \(n\to\infty\),
\begin{equation*}
\operatorname{tr}\!\left(
(I+\nu S^\top S)^{-2}R_n(g)
\right)
=
\sum_{j=1}^{n}
\frac{g(\sfx_j)}
{(1+\nu s(\sfx_j)^d)^{2}}
+
\mathrm{O}(1),
\end{equation*}
and
\begin{equation*}
\operatorname{tr}\!\left(
\nu S^\top S
(I+\nu S^\top S)^{-2}
\right)
=
\sum_{j=1}^{n}
\frac{\nu s(\sfx_j)^d}
{(1+\nu s(\sfx_j)^d)^{2}}
+
\mathrm{O}(1).
\end{equation*}
\end{lemma}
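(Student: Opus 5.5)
The plan is to compare $S^\top S$ with $R_n(s^d)$, which is exactly diagonalized by the discrete sine transform; the statement then reduces to a trace-perturbation estimate that must hold uniformly in $\nu$.

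\textbf{The model computation.} Let $U_n$ be the orthogonal discrete sine-transform matrix from the proof of Lemma \ref{lem_aux_toeplitz}. Then for every bounded $g$, $R_n(g)=U_n^\top\operatorname{diag}(g(\sfx_1),\ldots,g(\sfx_n))U_n$. Hence all matrices $R_n(\cdot)$ commute. Put $R:=R_n(s^d)$. Then
\[
\operatorname{tr}\bigl((I+\nu R)^{-2}R_n(g)\bigr)=\sum_{j=1}^n\frac{g(\sfx_j)}{(1+\nu s(\sfx_j)^d)^2},
\qquad
\operatorname{tr}\bigl(\nu R(I+\nu R)^{-2}\bigr)=\sum_{j=1}^n\frac{\nu s(\sfx_j)^d}{(1+\nu s(\sfx_j)^d)^2},
\]
and both identities hold exactly. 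So it suffices to show that replacing $A:=S^\top S$ by $R$ inside the two traces costs $\OO(1)$, uniformly in $\nu>0$.

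\textbf{Structure of $E:=A-R$.} Since $S$ is the $n\times n$ compression of the lower-triangular Toeplitz operator $T(s_0^d)$, and $T(s_0^d)^{*}T(s_0^d)=T(s^d)$, we get $A=T_n(s^d)-K_n$. Here $K_n$ is the contribution of the rows $t>n$ that are cut off. It is a Hankel-type block localized at the lower-right corner: $K_n(j,k)=\sum_{t>n}S(t-j)S(t-k)$, a function of $(n-j,n-k)$. By the entrywise identity in the proof of Lemma \ref{lem_aux_toeplitz} (applied to $g_n$, plus the tail), $T_n(s^d)-R=H_n(s^d)+\mathcal{W}_nH_n(s^d)\mathcal{W}_n$ up to an error of size $\|g_n-s^d\|_\infty$. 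Because the Fourier coefficients of $s^d$ decay like $|m|^{-1-2d}$, this error is $\OO(n^{-2d})$ in operator norm. Therefore $E$ is a sum of at most three corner-localized Hankel-type pieces plus a remainder of operator norm $\OO(n^{-2d})$. Moreover $\|A\|$ and $\|R\|$ are bounded by $\sup s^d=4^d$.

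\textbf{Trace perturbation.} Write $\phi_1(\lambda)=(1+\nu\lambda)^{-2}$ and $\phi_2(\lambda)=\nu\lambda(1+\nu\lambda)^{-2}$ on $[0,4^d]$. Both take values in $[0,1]$ and have total variation at most $2$, uniformly in $\nu$.

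For $\phi_2$ and for $\phi_1$ with $g\equiv1$, I will generalize Lemma \ref{lem_aux_trace_rank}. Express the trace through eigenvalue counting functions,
\[
\operatorname{tr}\phi(A)=\phi(4^d)\,n-\int N_A(\lambda)\,\dd\phi(\lambda),
\]
where $N_A(\lambda)=\#\{j:\lambda_j(A)\le\lambda\}$. Then
\[
|\operatorname{tr}\phi(A)-\operatorname{tr}\phi(B)|\le \operatorname{TV}(\phi)\,\sup_\lambda|N_A(\lambda)-N_B(\lambda)|.
\]
By Lemma \ref{thm_aux_interlacing}, combined with Weyl's inequality for a small-norm part, the counting functions differ by at most $\operatorname{rank}(F)$ whenever $E=F+G$ with $\|G\|$ smaller than the gap being counted.

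For general $g$ in the first statement, I will instead use the resolvent identity
\[
(I+\nu A)^{-1}-(I+\nu R)^{-1}=-(I+\nu A)^{-1}\nu E(I+\nu R)^{-1},
\]
applied twice. This bounds the trace difference by $\OO(\|g\|_\infty)$ times the trace norm of the products $(I+\nu A)^{-1}\nu E(I+\nu R)^{-1}$. I will then exploit that $E$ lives on the boundary coordinates, where $(I+\nu R)^{-1}$ acts nearly like a projection onto low frequencies.

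\textbf{Main obstacle.} The crux is that for $d\le1/2$ the corner Hankel pieces are \emph{not} of uniformly bounded rank, and they need not be uniformly trace class. Lemma \ref{lem_aux_trace_rank} therefore does not apply directly, and a crude operator-norm bound loses a factor of $n$.

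My first attempt is a dyadic splitting of each corner block. Take the indices within distance $2^k$ of the corner, for $k\le\log_2 n$. Each such Hankel block has entries of order $m^{-1-2d}$, so its singular values decay geometrically beyond a rank that is $\OO(1)$ per block. This gives a decomposition $E=F+G$ in which the number of singular values of $E$ exceeding $\varepsilon$ grows only like $\OO(\log(1/\varepsilon))$. Integrating this against $\dd\phi$, and noting that $\phi$ only varies on the $\lambda$-scale $\nu^{-1}$, would give a bound uniform in $\nu$, provided the weight $\nu\lambda(1+\nu\lambda)^{-2}$ cancels the logarithm.

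If this cannot be made uniform, the fallback is to import a known Szeg\H{o}-type second-order trace result for Toeplitz-plus-Hankel matrices with symbols that are Fisher--Hartwig (Jacobi) singular at $u=0$. Such a result gives $\OO(1)$ corrections for bounded functions of bounded variation.
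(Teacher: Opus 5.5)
Your reduction to the exactly diagonal model $R=R_n(s^d)$ is correct. However, the step that carries all the content is not supplied: showing that replacing $A=S^\top S$ by $R$ changes both traces by $\OO(1)$ \emph{uniformly in} $\nu$. This uniformity matters because the lemma is later used with $\nu\to\infty$ for $d\ge1/2$ and with $\nu\asymp1$ for $d<1/2$. The routes you sketch do not deliver it.

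\textbf{The obstruction is not limited to $d\le1/2$.} For every non-integer $d$, all Fourier coefficients of $s^d$ and all $c_m=(-1)^m\binom{d}{m}$ are nonzero, and the generating functions are not rational. So by Kronecker's theorem the corner pieces $H_n(s^d)$, $\mathcal{W}_nH_n(s^d)\mathcal{W}_n$ and $K_n$ have rank growing with $n$, and Lemma \ref{lem_aux_trace_rank} never applies. Uniform trace-class bounds, available for $d>1/2$, do not suffice by themselves, since $\|\phi_1'\|_\infty=2\nu$. What you need is a bound on $\sup_\lambda|N_A(\lambda)-N_R(\lambda)|$.

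\textbf{Your counting claim does not give that bound.} For $E=F+G$ one only has $|N_A(\lambda)-N_R(\lambda)|\le\operatorname{rank}F+\#\{j:|s(\sfx_j)^d-\lambda|\le\|G\|\}$. In the bulk the grid values are spaced $\asymp1/n$, so the last term is $\asymp n\|G\|+1$.
\begin{itemize}
\item With your remainder $\|G\|\asymp n^{-2d}$, this is $\asymp n^{1-2d}\to\infty$ for $d<1/2$.
\item With the dyadic splitting, forcing $\|G\|\lesssim1/n$ makes $\operatorname{rank}F$ unbounded in $n$. By your own singular-value count the resulting bound is at best $\OO(\log n)$, not $\OO(1)$.
\item The hoped-for cancellation by the weight $\nu\lambda(1+\nu\lambda)^{-2}$ cannot occur when $\nu\asymp1$, because then $\phi_1$ and $\phi_2$ vary across the whole bulk.
\end{itemize}

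\textbf{The other two routes have the same gap.} The resolvent-identity route for general $g$ puts a factor $\nu$ in front of $E$. Without bounded rank, the trace norm of $(I+\nu A)^{-1}\nu E(I+\nu R)^{-1}$ is not controlled uniformly as $\nu\to\infty$. With bounded rank you would instead use $\operatorname{rank}\le r$ together with $\|(I+\nu A)^{-1}-(I+\nu R)^{-1}\|\le2$. The ``nearly a projection'' heuristic is not quantified. The fallback names no specific theorem. The available Szeg\H{o}--Widom second-order formulas concern fixed smooth test functions, not the family $\phi_\nu$ uniformly in $\nu$, and not arbitrary functions of bounded variation.

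\textbf{The missing idea} is to avoid an additive comparison of $A$ with $R_n(s^d)$ altogether, and to use L\"owner-order comparisons whose error has \emph{bounded} rank.
\begin{itemize}
\item Write $d=\ell+\beta$ with $0<\beta\le1$. The paper imports from \cite{BURMAN2006677} a one-parameter family of comparisons of $S^\top S$ (and, for the reverse inequality, of $SS^\top$, which has the same spectrum). The comparison matrices are built from the tridiagonal second-difference matrix $T_n(s)$ via the tangent line of $x\mapsto x^\beta$ at $x=s(p)$.
\item These comparisons hold up to a symmetric perturbation of rank at most $2\ell$, a bound independent of $n$, $p$ and $\nu$. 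For $0<d\le1$ the comparison is exact.
\item Since $T_n(s)$ is exactly diagonalized by the sine basis with eigenvalues $s(\sfx_t)$, no Hankel correction ever appears.
\item Choosing $p$ equal to each grid point makes the comparison eigenvalue there equal to $s(p)^d$. Interlacing (Lemma \ref{thm_aux_interlacing}) then brackets the spectrum of $A$ by the grid values $s(\sfx_t)^d$, with an index shift of at most $2\ell$.
\item The spectral functions are monotone and bounded by $1$, and general $g$ is reduced to bounded monotone pieces by the Jordan decomposition so that the orderings match. Hence the error is $\OO(1)$ uniformly in $\nu$.
\end{itemize}
To rescue your framework you would need an equivalent input, namely a bound on the spectral shift between $A$ and $R$ that is uniform in $\lambda$, $n$ and $\nu$. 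Nothing in your proposal provides it.
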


\begin{proof}
It suffices to prove the first assertion when \(g\) is bounded, nonnegative, and decreasing. The result for a general function of bounded variation then follows by applying the Jordan decomposition and using linearity. Indeed, let
\[
V_g(u)=\operatorname{TV}_{[0,u]}(g),
\qquad 0\leq u\leq\pi,
\]
and define
\[
g_2(u)=V_g(\pi)-V_g(u)+\|g\|_\infty,
\qquad
g_1(u)=g(u)+g_2(u).
\]
Then \(g_1\) and \(g_2\) are bounded, nonnegative, and decreasing on \([0,\pi]\), and \(g=g_1-g_2\). Since both \(R_n(\cdot)\) and the right-hand side of the asserted approximation are linear in \(g\), the general result follows by applying the decreasing-case result separately to \(g_1\) and \(g_2\).

Write \(A=S^\top S\), and let \(d=\ell+\beta\), where \(\ell\) is a nonnegative integer and \(0<\beta\leq1\). We invoke the finite-rank comparison for fractional-difference operators established in \cite{BURMAN2006677}. In the notation used here, this result implies that, for every \(p\in(0,\pi)\), \(A\) can be compared with
\begin{equation*}
(1-\beta)s(p)^\beta I
+
\beta s(p)^{\beta-1}T_n(s)^{\ell+1},
\end{equation*}
up to a symmetric perturbation of rank at most \(2\ell\), where we recall (\ref{eq_su}). 

We first prove the approximation involving \(R_n(g)\). The matrix \(T_n(s)\) is diagonalized by the discrete sine basis \(\{\bm{e}_t\}_{t=1}^n\) whose entries are $\{\sqrt{2/(n+1)} \sin(j \sfx_t)\}$, with eigenvalues \(s(\sfx_t)\) \cite{grenander_szego_1958}. Since \(R_n(g)\) is diagonalized by the same basis, its eigenvalues are \(g(\sfx_t)\). For a fixed \(p\in(0,\pi)\), the comparison from \cite[Theorem~6(a) and Theorem~7(b), (d)]{BURMAN2006677}, followed by the interlacing theorem in Theorem \ref{thm_aux_interlacing}, gives the lower bound
\begin{align*}
&\lambda_{j+2\ell}\!\left(
(I+\nu A)^{-1}
R_n(g)
(I+\nu A)^{-1}
\right) \\
&\qquad \geq
\left[
1+\nu\left(
(1-\beta)s(p)^\beta
+
\beta s(p)^{\beta-1}s(\sfx_{n+1-j})^{\ell+1}
\right)
\right]^{-2}
g(\sfx_{n+1-j}),
\end{align*}
for \(j=1,\ldots,n-2\ell\).

To identify the right-hand side, choose \(p=\sfx_{n+1-j}\). Since \(d=\ell+\beta\), we have
\begin{equation*}
(1-\beta)s(p)^\beta
+
\beta s(p)^{\beta-1}s(p)^{\ell+1}
=
s(p)^d.
\end{equation*}
Moreover, \(\{\sfx_t\}_{t=1}^n\) is decreasing in \(t\), \(s(\sfx_t)\) is decreasing in \(t\), and \(g(\sfx_t)\) is increasing in \(t\), because \(g\) is decreasing on \([0,\pi]\). Thus, the relevant eigenvalues are ordered as required for the interlacing argument. Summing the preceding inequality over \(j=1,\ldots,n-2\ell\) gives
\begin{align*}
&\operatorname{tr}\!\left(
(I+\nu A)^{-2}R_n(g)
\right) \\
&\qquad \geq
\sum_{j=1}^{n-2\ell}
\frac{g(\sfx_{n+1-j})}
{(1+\nu s(\sfx_{n+1-j})^d)^{2}} \\
&\qquad =
\sum_{j=1}^{n}
\frac{g(\sfx_j)}
{(1+\nu s(\sfx_j)^d)^{2}}
+
\mathrm{O}(1).
\end{align*}
The omitted \(2\ell\) terms are uniformly bounded because \(g\) is bounded. The companion comparison for \(SS^\top\) in \cite[Theorem~6(a) and Theorem~7(b), (d)]{BURMAN2006677}, together with the same interlacing argument, yields the reverse bound
\begin{equation*}
\operatorname{tr}\!\left(
(I+\nu A)^{-2}R_n(g)
\right)
\leq
\sum_{j=1}^{n}
\frac{g(\sfx_j)}
{(1+\nu s(\sfx_j)^d)^{2}}
+
\mathrm{O}(1).
\end{equation*}
This proves the first assertion.

It remains to prove the bias-trace approximation. Since \(R_n(1)=I\), the same argument, with \(g\equiv1\), applies to the two resolvent powers \(q=1\) and \(q=2\), and gives
\begin{equation*}
\operatorname{tr}\!\left(
(I+\nu A)^{-q}
\right)
=
\sum_{j=1}^{n}
(1+\nu s(\sfx_j)^d)^{-q}
+
\mathrm{O}(1),
\qquad
q=1,2.
\end{equation*}
Finally, the elementary identity
\begin{equation*}
\nu A(I+\nu A)^{-2}
=
(I+\nu A)^{-1}
-
(I+\nu A)^{-2},
\end{equation*}
implies that
\begin{align*}
\operatorname{tr}\!\left(
\nu A(I+\nu A)^{-2}
\right)
&=
\sum_{j=1}^{n}
\left[
\frac{1}{1+\nu s(\sfx_j)^d}
-
\frac{1}{(1+\nu s(\sfx_j)^d)^{2}}
\right]
+
\mathrm{O}(1) \\
&=
\sum_{j=1}^{n}
\frac{\nu s(\sfx_j)^d}
{(1+\nu s(\sfx_j)^d)^{2}}
+
\mathrm{O}(1).
\end{align*}
This proves the second assertion.
\end{proof}

\subsection{Proof of Theorem \ref{thm_maintechinicaltheorem}}\label{appendix_techinmain}

For notational simplicity, let $\overline S=S^{(r-1)}$
be the matrix obtained from \(S\) by deleting its first \(r-1\) rows, and define
\begin{equation*}
\overline U_d=\overline S^\top\overline S,
\qquad
U_d=S^\top S.
\end{equation*}
Both \(\overline U_d\) and \(U_d\) are \(n\times n\) symmetric matrices. Throughout this proof, \(f\) denotes the spectral density in \eqref{eq_spectraldensity}. Recall (\ref{eq_estimatefinal}). We can write
\begin{equation*}
H_{\nu,d}=(I+\nu\overline U_d)^{-1},
\qquad
\widehat{\bm{\mu}}=H_{\nu,d}\bm{Y},
\qquad
\overline{\bm{\mu}}=H_{\nu,d}\bm{\mu}.
\end{equation*}

Since \(\widehat{\bm{\mu}}-\overline{\bm{\mu}}=H_{\nu,d}\bm{\varepsilon}\) and \(\mathbb{E}(\bm{\varepsilon}\bm{\varepsilon}^\top)=T_n(f)\), the variance component is
\begin{equation*}
\mathbb{E}\left\|
\widehat{\bm{\mu}}-\overline{\bm{\mu}}
\right\|_2^2
=
\operatorname{tr}\!\left(
H_{\nu,d}^2T_n(f)
\right).
\end{equation*}
The matrices \(\overline U_d\) and \(U_d\) differ by a matrix of rank at most \(r-1\). Indeed, the resolvent identity gives
\begin{equation*}
\mathit{H}_{\nu,d}-(I+\nu U_d)^{-1}
=
\nu\mathit{H}_{\nu,d}(U_d-\overline U_d)(I+\nu U_d)^{-1}.
\end{equation*}
Hence, this difference has rank at most \(r-1\). Consequently,
\(\mathit{H}_{\nu,d}^2-(I+\nu U_d)^{-2}\) has rank at most \(2(r-1)\). Both resolvents are contractions because \(\overline U_d\) and \(U_d\) are positive semidefinite, while \(\|T_n(f)\|\leq\|f\|_\infty\) because \(f\) is bounded \cite{grenander_szego_1958}. Thus, the trace of the resulting bounded-rank perturbation is \(\mathrm{O}(1)\), and hence
\begin{equation*}
\operatorname{tr}\!\left(
H_{\nu,d}^2T_n(f)
\right)
=
\operatorname{tr}\!\left(
(I+\nu U_d)^{-2}T_n(f)
\right)
+
\mathrm{O}(1).
\end{equation*}
Let $B_\nu=(I+\nu U_d)^{-2}.$ By Lemma \ref{lem_aux_toeplitz}, we may write
\[
T_n(f)-R_n(f)
=
H_n(f)+\mathcal{W}_nH_n(f)\mathcal{W}_n+E_n,
\]
where
\[
\|E_n\|
=
\mathrm{O}\!\left(\|f-f_n\|_\infty\right).
\]
Assumption \ref{assum_mainassumption}(1) implies that
\(\sum_{m\in\mathbb{Z}}|m f_m|<\infty\). To bound the Hankel term, write
\[
H_n(f)=\sum_{m=2}^{2n}f_mJ_m,
\]
where \(J_m\) has ones on its \(m\)th anti-diagonal and zeros elsewhere.
Since \(\|J_m\|_1\leq m-1\), we have
\[
\|H_n(f)\|_1
\leq
\sum_{m=2}^{2n}(m-1)|f_m|
\leq
\sum_{m\in\mathbb{Z}}|m f_m|
=
\mathrm{O}(1).
\]
Because \(\mathcal{W}_n\) is orthogonal,
\[
\|\mathcal{W}_nH_n(f)\mathcal{W}_n\|_1
=
\|H_n(f)\|_1
=
\mathrm{O}(1).
\]
Moreover,
\[
n\|f-f_n\|_\infty
\leq
n\sum_{|m|>n}|f_m|
\leq
\sum_{|m|>n}|m f_m|
=
\mathrm{o}(1).
\]

Since \(\|B_\nu\|\leq1\), the trace inequality
\(\left|\operatorname{tr}(B_\nu C)\right|\leq\|B_\nu\|\|C\|_1\)
shows that each Hankel term contributes \(\mathrm{O}(1)\). Also,
\[
\left|\operatorname{tr}(B_\nu E_n)\right|
\leq
n\|B_\nu\|\|E_n\|
=
\mathrm{o}(1).
\]
Consequently,
\[
\operatorname{tr}\!\left(B_\nu T_n(f)\right)
=
\operatorname{tr}\!\left(B_\nu R_n(f)\right)
+
\mathrm{O}(1).
\]

Furthermore, (1) of Assumption \ref{assum_mainassumption} implies
\(\sum_{h\in\mathbb{Z}}|h\rho(h)|<\infty\). Hence the spectral density \(f\) is continuously differentiable on \([0,\pi]\), and therefore has bounded variation. Thus, the first assertion of Lemma \ref{thm_aux_trace_approx} applies with \(g=f\) and gives
\begin{equation*}
\mathbb{E}\left\|
\widehat{\bm{\mu}}-\overline{\bm{\mu}}
\right\|_2^2
=
\sum_{j=1}^{n}
\frac{f(\sfx_j)}
{(1+\nu s(\sfx_j)^d)^{2}}
+
\mathrm{O}(1),
\end{equation*}
which proves \eqref{eq_varaincepart}.

We next consider the bias component. Recall (\ref{eq_mu1part}) and \(\overline S\bm{\mu}_1=\bm{0}\) and
\begin{equation*}
\overline S\mathcal S_d=
\begin{bmatrix}
0 & I_{n-r+1}
\end{bmatrix},
\end{equation*}
where the zero block has \(r-1\) columns. Consequently,
\(\overline S\mathcal S_d\mathcal S_d^\top\overline S^\top=I_{n-r+1}\). It follows that
\begin{equation*}
\overline{\bm{\mu}}-\bm{\mu}
=
-\nu H_{\nu,d}\overline S^\top\overline S\mathcal S_d\bm{\gamma}.
\end{equation*}
Using \(\mathbb{E}(\bm{\gamma}\bm{\gamma}^\top)=\sigma_\gamma^2 I\), we obtain
\begin{align*}
\mathbb{E}\left\|
\overline{\bm{\mu}}-\bm{\mu}
\right\|_2^2
&=
\sigma_\gamma^2\nu^2
\operatorname{tr}\!\left(
H_{\nu,d}
\overline S^\top\overline S
\mathcal S_d\mathcal S_d^\top
\overline S^\top\overline S
H_{\nu,d}
\right) \\
&=
\sigma_\gamma^2\nu^2
\operatorname{tr}\!\left(
H_{\nu,d}^2\overline U_d
\right) \\
&=
\sigma_\gamma^2\nu
\operatorname{tr}\!\left(
H_{\nu,d}^2\nu\overline U_d
\right).
\end{align*}

Let \(q_\nu(x)=\nu x(1+\nu x)^{-2}\). Since $H_{\nu,d}^2\nu\overline U_d=q_\nu(\overline U_d),$
and \(\overline U_d-U_d\) has rank at most \(r-1\), Lemma
\ref{lem_aux_trace_rank} and the uniform bound
\(\|q_\nu\|_\infty\leq 1/4\) imply that
\begin{equation*}
\operatorname{tr}\!\left(
H_{\nu,d}^2\nu\overline U_d
\right)
=
\operatorname{tr}\!\left(
q_\nu(U_d)
\right)
+
\mathrm{O}(1).
\end{equation*}
By Lemma \ref{thm_aux_trace_approx}, we have 
\begin{equation*}
\operatorname{tr}\!\left(
q_\nu(U_d)
\right)
=
\sum_{j=1}^{n}
\frac{\nu s(\sfx_j)^d}
{(1+\nu s(\sfx_j)^d)^{2}}
+
\mathrm{O}(1).
\end{equation*}
Consequently,
\begin{equation*}
\mathbb{E}\left\|
\overline{\bm{\mu}}-\bm{\mu}
\right\|_2^2
=
\sigma_\gamma^2\nu
\sum_{j=1}^{n}
\frac{\nu s(\sfx_j)^d}
{(1+\nu s(\sfx_j)^d)^{2}}
+
\mathrm{O}(\sigma_\gamma^2\nu),
\end{equation*}
which proves \eqref{eq_biaspart}.

\bibliographystyle{abbrv}
\bibliography{ref}

\end{document}